\newcounter{FNC}[page]
\def\fauxfootnote#1{{\addtocounter{FNC}{2}$^\fnsymbol{FNC}$%
     \let\thefootnote\relax\footnotetext{$^\fnsymbol{FNC}$\Magenta{#1}}}}
\numberwithin{equation}{section}
\newtheorem{theorem}{Theorem}[section]
\newtheorem{lemma}[theorem]{Lemma}
\newtheorem{corollary}[theorem]{Corollary}
\newtheorem{thm}[theorem]{Theorem}
\newtheorem{defn}[theorem]{Definition}
\newtheorem{exm}[theorem]{Example}
\newtheorem{rem}[theorem]{Remark}
\author{Cassandra Durell}
\author{Stefan Forcey} \address[S. Forcey]{
    Department of Mathematics\\
    The University of Akron\\
    Akron, OH 44325-4002
    }
    \email{sf34@uakron.edu}  \urladdr{http://www.math.uakron.edu/\~{}sf34/}
\title[Level-1 Network Polytopes]{Level-1 Phylogenetic Networks and their Balanced Minimum Evolution Polytopes}
\keywords{phylogenetics, polytope, neighbor joining, facets}
\subjclass[2000]{90C05, 52B11, 92D15}
\begin{document}

\begin{abstract}
    Balanced minimum evolution is a distance-based criterion for the reconstruction of phylogenetic trees. Several algorithms exist to find the optimal tree with respect to this criterion. One approach is to minimize a certain linear functional over an appropriate polytope. Here we present polytopes that allow a similar linear programming approach to finding phylogenetic networks.   We investigate a two-parameter family of polytopes that arise from phylogenetic networks, and which specialize to the Balanced Minimum Evolution polytopes as well as the Symmetric Travelling Salesman polytopes. We show that the vertices correspond to certain level-1 phylogenetic networks, and that there are facets or faces for every split. We also describe minimal facets and a family of faces for every dimension.
 \end{abstract}
\keywords{polytopes, phylogenetics, trees, metric spaces}
\maketitle

\section{Introduction}
In balanced minimum evolution (BME) methods we try to minimize the total branch length of a candidate phylogenetic tree for a given discrete metric. The BME method can be described as a linear programming problem. The convex hull of solutions for this problem, given $n$ taxa, is the ${n \choose 2}-n$ dimensional polytope BME(n). In this paper we present a generalization of the BME polytopes which allows the solutions to be  phylogenetic networks rather than  trees.

The main results in this paper are theorems about the convex polytopes we define, denoted BME($n,k$) for all $0\le k \le n-3$. We prove that their vertices correspond bijectively to binary level-1 phylogenetic networks with $n$ leaves and $k$ non-trivial bridges, in Theorem~\ref{weight}, Theorem~\ref{min}, and Corollary~\ref{vert}. In Theorem~\ref{t:vertcount} we find a formula to count the vertices:
$$
{n-3 \choose k}\frac{(n+k-1)!}{(2k+2)!!}.
$$ Section~\ref{basics} contains the basic definitions we need.  In Section~\ref{poly} we define and compare the new polytopes to well-known families: the symmetric travelling salesman polytopes and the balanced minimum evolution polytopes. Ours are nested inside the STSP and outside the BME polytopes (after  scaling) as shown in Theorem~\ref{t:nest}. In Section~\ref{faces} we describe a good deal of the facial structure of our new polytopes. Every less-refined phylogenetic network turns out to correspond to multiple faces: one in each polytope with vertices which refine that network. See Theorem~\ref{t:faces} for details. In Section~\ref{s:facet} we show that some of these faces are actually facets, specifically those corresponding to splits as shown in Theorem~\ref{split_facet}. We also describe lower bound facets in Theorem~\ref{lb_facet}. Finally in Section~\ref{5:1} we describe further results about the specific case of networks with five leaves and a single bridge.  We give a complete classification of the 62 facets in this case, in Theorem~\ref{2splits}, Theorem~\ref{lb_51}, Theorem~\ref{exclud},  and Theorem~\ref{cyc51}.

\section{Basics}\label{basics}
We begin with the set $[n] = \{1,2,...,n\}$, in which the integers 1 through $n$ stand for biological taxa; along with a given non-negative pairwise distance function, or dissimilarity matrix $\mathbf{d}$, with entries denoted ${d}_{ij}= \mathbf{d}(i,j)$ for each pair of taxa $i,j\in[n]$. (Note that $\mathbf{d}$ can also be described as a discrete metric, or a distance vector.) We try to find an appropriate combinatorial structure to display that data. The structures we directly consider here are split networks and phylogenetic networks. We will restrict to the unrooted versions, and focus on specializations of one or both structures such as: phylogenetic trees, circular split networks, level-1 and 1-nested phylogenetic networks.

The simplest structure we consider is an (unrooted) \emph{phylogenetic tree}.  Mathematically this is a graph with no cycles, and with no nodes of degree 2. The nodes with degree larger than 2 are unlabeled, but the $n$ leaves are labeled bijectively with our $n$ taxa. There is also an option of assigning non-negative lengths to the edges of the tree. The appropriateness of this data structure is clear when the edge-lengths along the path between two leaves $i$ and $j$ sum to the given distance ${d}_{ij}$ between those leaves. In fact, if our given pair-wise distances allow such a representation then the weighted tree is unique.

A \emph{split system} is a more general data structure,  (which specializes to a phylogenetic tree). A \emph{split} of our set is a partition $A|B$ of $[n]$ into two parts. When one part has cardinality 1 we call the split \emph{trivial}.   A split system $s$ on $[n]$ is any collection of splits which contains all the trivial splits.  We say a split system $s'$ \emph{refines}  $s$ when $s' \supset s.$
A \emph{split network} is a graphical representation of a split system. It is a special connected simple graph. The $n$ taxa are again seen as the labeled leaves (degree 1 vertices). Each split is represented by a set of parallel edges which is a minimal cut of the graph; that is, removing that set of edges separates the graph into two components, whose respective leaves are the parts of the split. Sometimes there are (nontrivial) splits represented by a single edge. We call these edges (nontrivial) \emph{bridges}.  A phylogenetic tree is a special split network. Its splits are all bridges. Upon removing a bridge the leaves of the resulting disconnected components are the parts of the split. The two parts of each split are called \emph{clades}. We say that the tree \emph{displays} those splits, or is \emph{consistent} with those splits.

\begin{figure}[h]
\centering{\includegraphics[width=\textwidth]{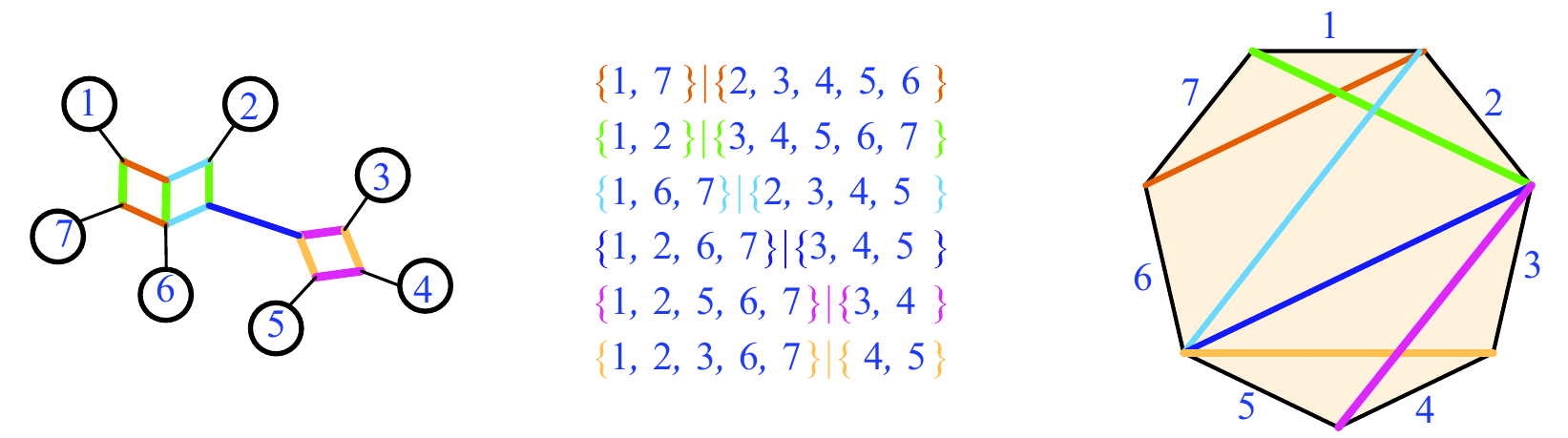}}
\caption{ A circular split system $s$, in the center (trivial splits not shown), with its circular split network on the left, and its polygonal representation on the right. This split network has one non-trivial bridge, giving the split $\{1,2,6,7\}|\{3,4,5\}.$ The split network is externally refined, so no bridges can be added.}
\label{f:network}
\end{figure}

  A \textit{circular}  split network is one whose graph may be drawn on the plane without edge crossings, and with its leaves all on the exterior of the diagram. The terminology is due to the fact that any such drawing automatically produces a circular ordering of the leaves. Note that these drawings are not fixed in the plane---twisting around a bridge gives a different diagram but represents the same split network.
As shown in \cite{dev-petti}, there is an equivalent \emph{dual polygonal  representation} of any circular
split network:  Given a circular split system with a circular
ordering $c$ of the species, consider a regular $n$-gon, with the
edges cyclically labeled according to $c$.  For each split, draw a
diagonal partitioning the appropriate edges; see Figure
\ref{f:network}. Note that splits which require multiple parallel edges in the the network picture correspond to diagonals
that are \emph{crossing} (they intersect other diagonals in the picture) and that bridges become \emph{noncrossing} diagonals.

\begin{defn}
An \emph{externally refined} split network $s$ is such that there is no split network $s'$ both refining $s$ and possessing more bridges than $s$.\end{defn} In the polygonal representation of an externally refined split network there are no non-crossing diagonals that can be added; that is, any additional split added to the network $s$ will correspond to a diagonal that intersects existing diagonals.
An externally refined phylogenetic tree has non-leaf nodes  that are all  of degree three; this is usually referred to as a \textit{binary}, or \emph{bifurcated} tree.
 Note that an externally refined split network can be a refinement of another externally refined split network.

The following definitions are from \cite{Gambette2017}. Another generalization of an unrooted phylogenetic tree is an (unrooted) \emph{phylogenetic network}: this is a simple connected graph with exactly $n$ labeled nodes of degree one, all other unlabeled nodes of degree at least three, and every cycle of length at least four. (Cycles here are simple cycles, with no repeated nodes other than the start.) If every edge is part of at most one cycle then the network is called \emph{1-nested}. If every node is part of at most one cycle, the network is called \emph{level-1}.  If that is true and  all the unlabeled non-leaf nodes also have degree three, then the network is called \emph{binary level-1}. Level-1 networks, as a set, include level-0 networks, which are the phylogenetic trees. The level-1 and 1-nested networks are special versions of galled trees, (which sometimes allow cycles of length three as in \cite{semple_steel_uni}), and of cactus models (which allow labels for non-leaves as in \cite{brandes}).


Notice that a phylogenetic tree is both a split network and a level-0 phylogenetic network. In contrast to the split networks, the phylogenetic networks do not have parallel sets of edges, but sets of edges are still used to represent splits.
 A \emph{minimal cut} $C$ of a phylogenetic network is a subset of the edges which, when removed, leaves two connected components. The edge set is minimal in the sense that no more edges are removed than is necessary for the disconnection. The split $A|B$ displayed by such a cut is the two sets of leaves of the two connected components.  A split $A|B$ is \emph{consistent} with a phylogenetic network if there is a minimal cut $C(A|B)$ displaying that split.  A split system $s$ is consistent with a phylogenetic network if all its splits are, and the (maximal) system of all such splits for a phylogenetic network $N$ is called $\Sigma(N).$
 In Figure~\ref{my_splits_ex_SN} we show a binary level-1 network $N$, and the associated maximal split network $\Sigma(N)$.
 Multiple different phylogenetic networks can map to the same split system  under the mapping $\Sigma$.

 \begin{figure}[h]
\centering{\includegraphics[width=\textwidth]{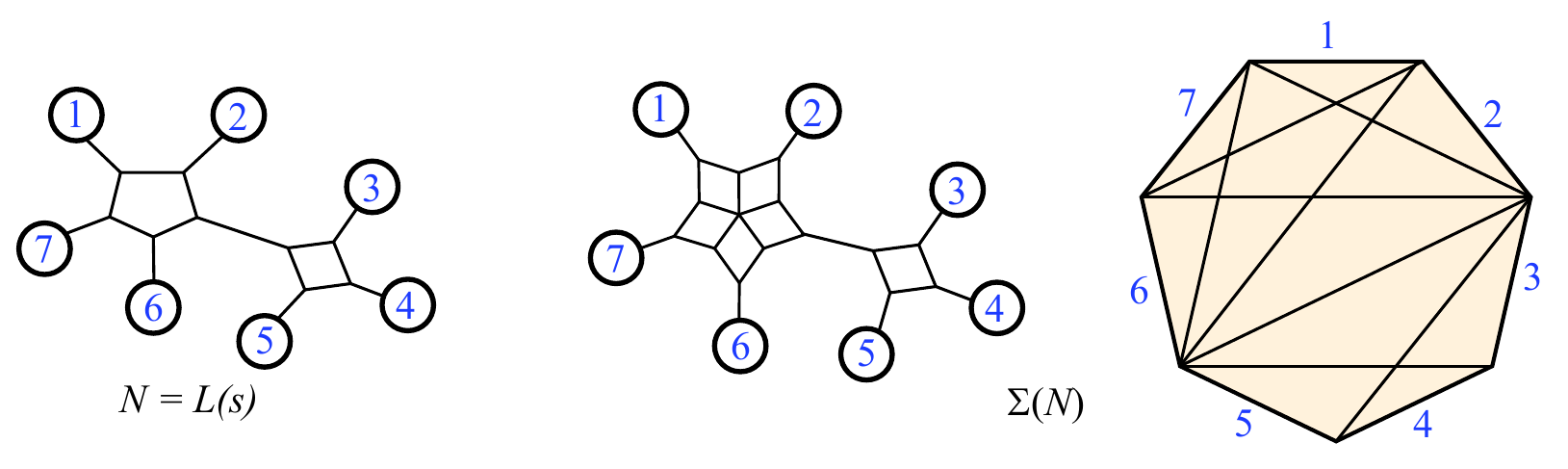}}
\caption{ A level-1 phylogenetic network $N$ with its associated maximal circular split system $\Sigma(N)$, shown both as a network and polygonal representation. Here $N$ is the image $L(s)$ for $s$ the split network in Figure~\ref{f:network}.}
\label{my_splits_ex_SN}
\end{figure}

 There is an even closer relationship between level-1 (and thus 1-nested) networks and circular split networks. In \cite{Gambette2017} it is shown that a split network $s$ is circular if and
only if there exists an unrooted level-1 network $N$ such that $s \subset \Sigma(N).$ For instance the split network $s$ in Figure~\ref{f:network} has splits a subset of those in $\Sigma(N)$ seen in Figure~\ref{my_splits_ex_SN}.

If $s$ is a circular split system then there is a simple way to associate to $s$ a specific 1-nested phylogenetic network denoted as $L(s)$.
\begin{defn}
  Construct this network $L(s)$ as follows:  begin with a split network diagram  of $s$ and  consider the diagram as a planar drawing of its underlying planar graph, with  leaves on the exterior.  Then 1) delete all the edges that are not adjacent to the exterior of that graph, and 2) smooth away any resulting degree-2 nodes.
\end{defn}

See $s$ in Figure~\ref{f:network} and $L(s)$ in Figure~\ref{my_splits_ex_SN}.
We see that $L(s)$ displays all the splits of $s$, and has the same bridges as $s$. Explicitly, any bridge of $s$ is displayed by a bridge in the image, and any other split in $s$ by a pair of edges in a cycle of $L(s)$.
Refinement is seen easily in the pictures of split networks, by collapsing parallel sets of edges or by removing diagonals from the polygon. The function $L$ preserves refinement in phylogenetic networks.  See Figure~\ref{my_splits_ex_cont} for an example.  If $s$ is an externally refined split network then $L(s)$ is a binary level-1 phylogenetic network.
\begin{figure}[h]
\centering{\includegraphics[width=\textwidth]{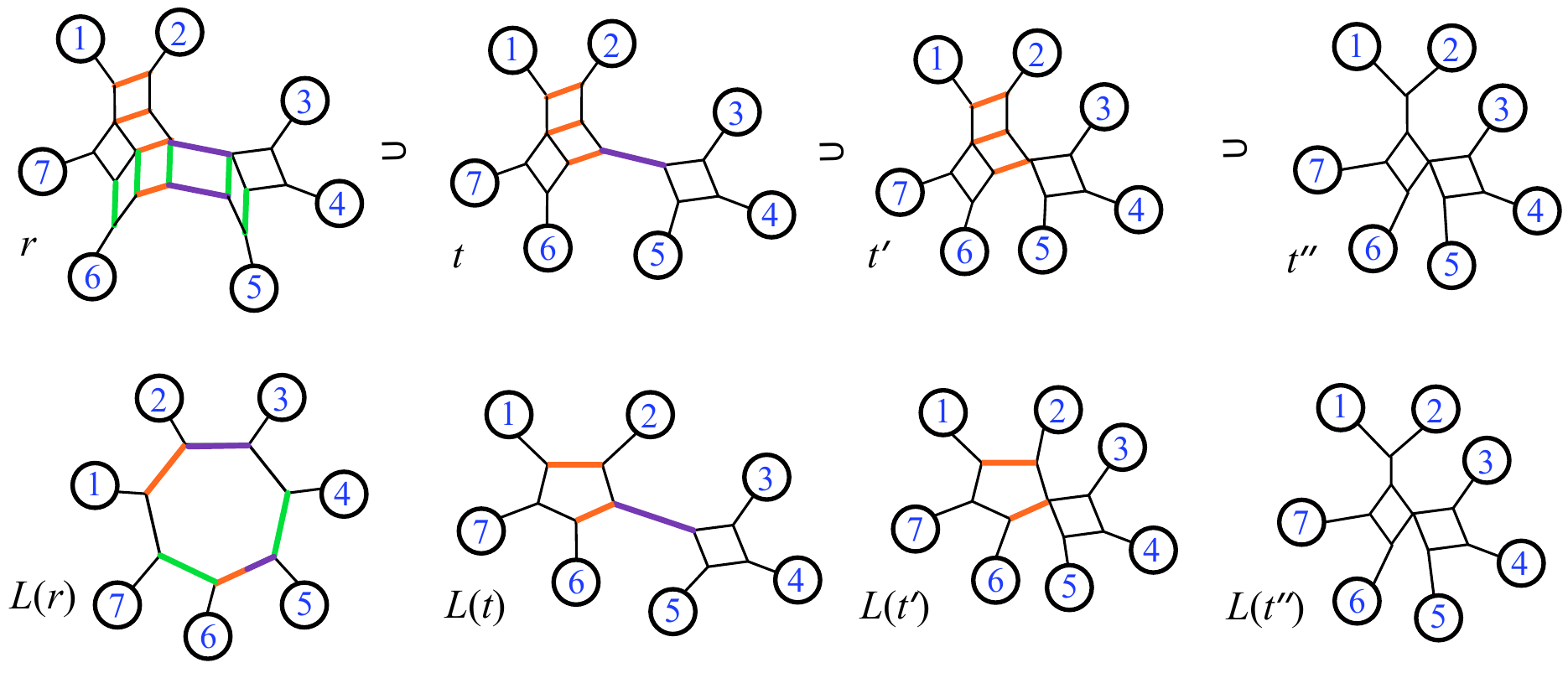}}
\caption{Here $r$  is a split network that refines $t$, which in turn refines $t'$, which refines $t''$. Also $\Sigma(L(t'')) \subset  \Sigma(L(t')) \subset \Sigma(L(t)) \subset \Sigma(L(r)).$}
\label{my_splits_ex_cont}
\end{figure}
Note that in \cite{Gambette2017} the authors define a similar function called $N$. Their function is equivalent to ours, if the definition in \cite{Gambette2017} is modified so as not to depend on $k$-marguerites.

\section{Polytopes}\label{poly}

A circular ordering $c$ is \textit{consistent} with a circular split system $s$ if a planar network of $s$ may be drawn, such that the leaves lie on the exterior in the order given by $c$. Similarly a circular ordering $c$ is \textit{consistent} with a level-1 phylogenetic network $N$ if a planar diagram of $N$ may be drawn, such that the leaves lie on the exterior in the order given by $c$. Thus a circular ordering $c$ is consistent with $s$ if and only if $c$ is consistent with $L(s).$

We define new families of polytopes by assigning  vectors to each externally refined circular split network $s$, and thus to each binary level-1 phylogenetic network.
\begin{defn} The vector ${\mathbf x}(s)$ is defined to have lexicographically ordered components ${x}_{ij}(s)$ for each unordered pair of distinct leaves $i,j \in [n]$ as follows:

\begin{equation} \label{e:bmenkvert} {x}_{ij}(s) = \begin{cases} 2^{k-b_{ij}} & \text{if there exists $c$ consistent with $s$; with $i,j$  adjacent in $c$,}\\ 0 & \text{otherwise.} \end{cases} \end{equation}

where $k$ is the number of bridges in $s$ and $b_{ij}$ is the number of bridges crossed on any path from $i$ to $j$.\end{defn}

The formula for ${\mathbf x}(N)$ works just as well when $N$ is a binary level-1 network. Indeed, we clearly have for any externally refined circular split network that $${\mathbf x}(L(s)) = {\mathbf x}(s).$$
  The vector is determined entirely by the number and placement of the bridges. Thus two split systems with the same associated binary level-1 network will have the same vector ${\mathbf x}.$ Therefore we will often use ${\mathbf x}(L(s))$ interchangeably with ${\mathbf x}(s),$ for $s$ any preimage of $L(s).$

\begin{figure}[h]
\centering{\includegraphics[width=\textwidth]{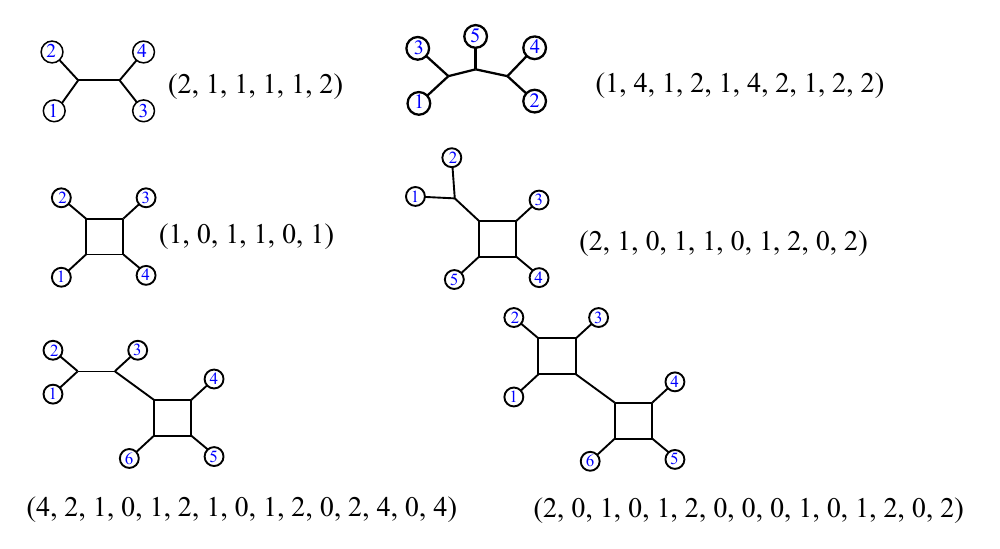}}
\caption{ Calculating some vectors $\mathbf{x}(s)$.}
\label{ex_vec}
\end{figure}

\begin{defn}
The convex hull of all the vectors ${\mathbf x}(s)$ for $s$ any externally refined circular network with $n$ leaves and $k$ nontrivial bridges is the \emph{level-1 network polytope} BME($n,k$). (The set of vectors is the same as the collection ${\mathbf x}(N)$ for $N$ any binary level-1 network with $n$ leaves and $k$ nontrivial bridges.)
\end{defn}

Before proving theorems about these newly discovered level-1 network polytopes, we relate them to well-known examples.  We begin with a review of the Balanced Minimal Evolution Polytopes BME($n$), and the Symmetric Travelling Salesman polytopes STSP($n$). The BME polytopes were first studied in 2008 \cite{Rudy2008}. We have found a simple description of the vertices as follows:

\begin{defn}
For each given binary phylogenetic tree $t$ with $n$ leaves and $k=n-3$ (nontrivial) bridges, the \textit{vertex vector} $\mathbf{x}(t)$ has ${n \choose 2}$ components
\begin{equation} \label{e:bmevert}
    x_{ij}(t) = 2^{k-b_{ij}}
\end{equation} where $b_{ij}$ is the number of nontrivial bridges on the tree from leaf $i$ to a different leaf $j.$

The convex hull of all the  $(2n-5)!!$ vertex vectors (for
all binary trees $t$ with $n$ leaves), is the polytope BME$(n),$  of dimension ${n \choose 2}-n.$
\end{defn}

Note that in \cite{forcey2015facets} the formula is given, equivalently, as $x_{ij}(t) = 2^{n-2-l_{ij}}$ where $l_{ij}$ is the number of internal nodes along the path from $i$ to $j$.

\subsection{Facets of BME($n$)}
The (lower-dimensional) \textit{clade faces} of BME ($n$) were described in \cite{Rudy}. Recently we have discovered  large
collections of (maximum dimensional) facets for all $n$, in \cite{forcey2015facets} and \cite{splito}.
  In the following list  we review our new facets, and show their statistics in Table~\ref{facts}.

\begin{enumerate}
\item Any
 split of $[n]$ with both parts larger than 3 corresponds to a facet of BME$(n)$, with vertices all the trees displaying that split.
\item A \emph{cherry} is a clade with two leaves. For each intersecting pair of cherries $\{a,b\},\{b,c\}$, there is a facet of BME$(n)$ whose
vertices correspond to trees having either cherry.
\item For each pair of leaves $\{i,j\}$, the caterpillar trees with that pair fixed at opposite ends constitute the vertices of a facet. These bound BME$(n)$ from below.
\end{enumerate}
%
%
\begin{table}[hb!]
\begin{tabular}{|c|c|c|c|}
\hline
facets & facet inequalities & number of & number of \\
of BME$(n)$ && facets &   vertices \\
&&& in facet\\
  \hline \hline
 \rule{0pt}{2.6ex}\rule[-1.2ex]{0pt}{0pt}   Caterpillar & $x_{ab}\ge 1$ &${n \choose
2}$& $(n-2)!$\\
  \hline
 \rule{0pt}{2.6ex}\rule[-1.2ex]{0pt}{0pt}  intersecting-& $~x_{ab}+x_{bc}-x_{ac} \le 2^{n-3}$ &${n \choose 2}(n-2)$ & $2(2n-7)!!$ \\
 cherry&&&\\
 \hline \rule{0pt}{2.6ex}\rule[-1.2ex]{0pt}{0pt} $(m,3)$-split, $m\ge3$ & $x_{ab}+x_{bc}+x_{ac} \le 2^{n-2}$ & ${n \choose 3}$ & $3(2n-9)!!$\\
  \hline \rule{0pt}{2.6ex}\rule[-1.2ex]{0pt}{0pt} non-trivial $(m,p)$-split $A|B$ &
$\displaystyle{\sum_{i,j\in A} x_{ij} \le (m-1)2^{n-3}}$ &
$2^{n-1}-{n \choose 2}-n-1$ &
 $(2m-3)!!(2p-3)!!$\\
  \hline
\end{tabular}\caption{Known facets for the BME polytopes, BME($n$) = BME($n,n-3$). The third is a special case of the fourth.
 The inequalities are given for any $a,b,c,\dots \in [n].$  \label{facts}}
\end{table}

\subsection{STSP} Next we recall the travelling salesman polytopes. We consider symmetric tours (circular orderings of the $n$ taxa). These can be pictured as placing the numbers in order on a circle in which the orientation is not specified---reading around the circle clockwise or counterclockwise gives the same circular ordering.
\begin{defn}
For each circular ordering $c$ on the set $[n]$, the \emph{incidence vector} ${\mathbf x}(c)$  has ${n \choose 2}$ components. The components are
\begin{equation} \label{e:stspvert}
  x_{ij}(c)= \begin{cases} 1 & \text{if  $i$ and $j$ are adjacent in $c$}\\ 0 & \text{if not.} \end{cases} \end{equation}
  The  \emph{Symmetric Traveling Salesman Polytope}, denoted as STSP($n$), is the convex hull of these $\frac{(n-1)!}{2}$ vertex vectors. It has dimension ${n \choose 2}-n.$
\end{defn}

The two best-known sets of facets of the STSP are the subtour-elimination facets and the lower bound facets. The latter are given by the inequalities $x_{ij}\ge 0.$ Subtour-elimination facets correspond to any nontrivial split $A|B$ of $[n].$  The circular orderings which make up the vertices of such a facet are those which contain the elements of $A$ as a contiguous list, and thus $B$ likewise. There can be only two connecting edges between the parts of the splits. Requiring that tours be Hamiltonian means that there must be at least two such connecting edges in any tour, which eliminates the possibility of a subtour through one or the other. Thus for a given split, a facet defining inequality is
$$\sum_{i\in A, j\in B} x_{ij} \ge 2.$$
We show an alternate inequality in Theorem~\ref{split_facet}. (Recall that the STSP is of smaller dimension than its ambient space, allowing choices of inequality for all its faces.)

Clearly the three definitions~\ref{e:bmevert},~\ref{e:stspvert}, and~\ref{e:bmenkvert} of  ${\mathbf x}(s)$ agree when their input structures overlap. Circular orderings (seen as unicyclic level-1 networks) have no bridges, so when leaves are adjacent the exponent becomes 0. Trees (seen as networks) allow any two leaves to be adjacent, so the components of ${\mathbf x}$ are all nonzero in that case. Thus we see that restricting BME($n,k$) to the phylogenetic trees,  where $k=n-3$, recovers the polytopes BME($n$). Restricting BME($n,k$) to the fully reticulated networks, where $k=0$,  recovers STSP($n$).
Next we characterize the vector ${\mathbf x}$ from a combinatorial viewpoint.

\begin{figure}[h]
\centering{\includegraphics[width=\textwidth]{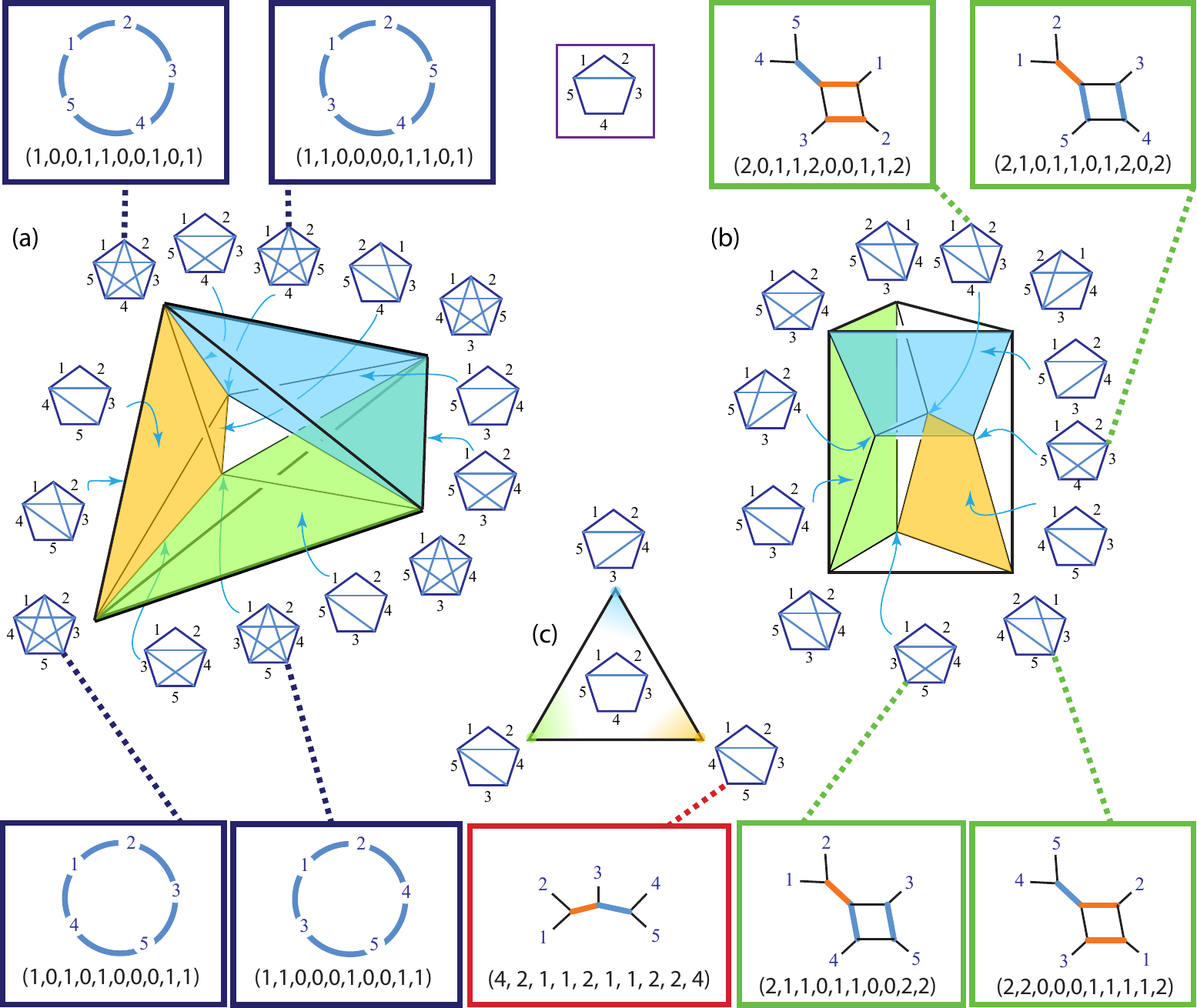}}
\caption {A facet (a) in BME(5,0) = STSP(5) , a facet (b) in BME(5,1), and a face (c) in BME(5,2) = BME(5). Summing either of the horizontal or  vertical pairs of vectors shown in (a) gives the corresponding vector shown in (b).  Summing all four vectors in (a) gives the vector shown in (c)}
\label{f:filter}
\end{figure}

\begin{thm}\label{cyc_vec}
For any externally refined circular split network $s$ with $k$ bridges, we have $${\mathbf x}(s) = \sum_{c  \text{ consistent} \atop \text {with } s} {\mathbf x}(c)$$ where the sum is over the exactly $2^k$ circular orderings $c$ consistent with $s$.
 Equivalently the component ${x}_{ij}(s)$
is the number of circular orderings consistent with that network for
which $i$ and $j$ are adjacent.
\end{thm}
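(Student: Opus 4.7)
The plan is to prove the equivalent second formulation — that $x_{ij}(s)$ equals the number of consistent orderings with $i, j$ adjacent — since the vector identity then follows componentwise, $x_{ij}(c) \in \{0,1\}$ being the indicator that $i, j$ are adjacent in $c$. The argument splits into two stages: first show that the total number of consistent orderings is $2^k$, then show that among them, exactly $2^{k - b_{ij}}$ (or $0$) have $i$ and $j$ adjacent.

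For the total count I would induct on $k$. When $k = 0$, $L(s)$ is a single blob whose planar embedding is unique up to the global reflection inherent in the definition of a circular ordering, giving $1 = 2^0$ ordering. For the inductive step, pick any bridge $e$ of $s$ and cut the polygonal representation along it, obtaining two smaller externally refined split systems $s_A$ and $s_B$, on $A \cup \{*_A\}$ and $B \cup \{*_B\}$ respectively (with $*_A, *_B$ the placeholders at the cut). These have $k_A + k_B = k - 1$ bridges in total, hence $2^{k_A}$ and $2^{k_B}$ consistent orderings by hypothesis. Each consistent ordering of $s$ decomposes uniquely into a consistent ordering on each side together with a choice of one of the two ways to glue across $e$, giving $2 \cdot 2^{k_A} \cdot 2^{k_B} = 2^k$ total. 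This simultaneously exhibits the flip group $G = (\mathbb{Z}/2)^k$, one generator $\sigma_e$ per bridge, acting freely and transitively on consistent orderings from any fixed base.

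For the refined count, decompose $G = G_{\mathrm{on}} \times G_{\mathrm{off}}$ according to whether each bridge lies on the $i$-to-$j$ path in $L(s)$, and show that the orderings with $i, j$ adjacent form either $\emptyset$ or a single coset of $G_{\mathrm{off}}$, of cardinality $|G_{\mathrm{off}}| = 2^{k - b_{ij}}$. Each off-path generator preserves adjacency: reflecting across an off-path bridge in the polygonal picture fixes the side containing both $i$ and $j$ pointwise, so their adjacency is unchanged. A single on-path flip destroys adjacency: when $i, j$ are adjacent they meet at a vertex of the $n$-gon which is an endpoint of the on-path bridge diagonal $e$, and since $e$ is nontrivial both sides carry at least two leaves, so the leaf at that junction on the $j$-side is replaced by a different one after reflecting across $e$. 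The hardest step will be ruling out a conspiracy of multiple on-path flips jointly restoring adjacency after each individually destroys it; I would handle this by induction on $b_{ij}$, cutting $s$ at one peripheral on-path bridge into strictly smaller externally refined subnetworks $s_A \ni i$ and $s_B \ni j$ and applying the inductive hypothesis to each side to obtain $2^{k_A - b_A}$ orderings of $s_A$ with $i$ adjacent to its placeholder and $2^{k_B - b_B}$ orderings of $s_B$ with $j$ adjacent to its placeholder. The gluing across the cut bridge then forces $i$ and $j$ to the same junction, giving $2^{k_A + k_B - (b_A + b_B)} = 2^{k - b_{ij}}$ orderings of $s$ with $i, j$ adjacent, completing the proof.
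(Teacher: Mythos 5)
Your proposal is correct and follows essentially the same route as the paper: the $2^k$ consistent circular orderings arise from independently twisting the $k$ nontrivial bridges, and $i,j$-adjacency is preserved by exactly the $k-b_{ij}$ off-path twists, giving the component value $2^{k-b_{ij}}$. Your cut-and-glue induction at a bridge merely makes rigorous the counting claims that the paper asserts by direct inspection (in particular ruling out several on-path twists jointly restoring adjacency), but the underlying mechanism is identical.
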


\begin{proof}
 We show this equality by considering the sums of respective components. The components of $\mathbf{x}(c)$ are always 1 or 0. Note that the only way to alter a circular ordering $c$ which is consistent with the externally refined $s$, to another such consistent $c'$, is to choose a nontrivial bridge and twist the graph around that bridge. That is, we redraw the graph with one side of the bridge reflected vertically; we call this twisting the bridge. That there are exactly $2^k$ circular orderings contributing over all is seen by independently twisting all $k$ nontrivial bridges--each bridge contributes two options.  If $c$ is a circular ordering consistent with $s$, and further $c$ has $i$ and $j$ adjacent, then the bridges between $i$ and $j$ cannot be twisted without losing this adjacency. However, the other bridges ($k-b_{ij}$ of them) may be independently twisted while preserving the adjacency of $i$ and $j$. Thus each of those latter bridges contributes a factor of 2 to the total count of the consistent circular orderings with $i$ and $j$ adjacent: upon summing we thus achieve the defined value of $x_{ij} = 2^{k-b_{ij}}$. Examples are seen in Figure~\ref{f:filter}.
\end{proof}

\begin{corollary}\label{compsum}
We can infer that for any network $s$ the sum of all the components of $\mathbf{x}(s)$ obeys $\sum x_{ij} = n2^k.$ \end{corollary}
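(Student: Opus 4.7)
The plan is to derive this directly from Theorem~\ref{cyc_vec} together with a trivial edge count on a Hamiltonian cycle. The only real content beyond the theorem is the observation that any circular ordering of $n$ distinct leaves has exactly $n$ adjacent (unordered) pairs.

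First I would invoke Theorem~\ref{cyc_vec} to write
\[
\sum_{\{i,j\}} x_{ij}(s) \;=\; \sum_{\{i,j\}} \sum_{c} x_{ij}(c) \;=\; \sum_{c} \sum_{\{i,j\}} x_{ij}(c),
\]
where both sums over $c$ range over the $2^k$ circular orderings consistent with $s$. So the problem reduces to evaluating $\sum_{\{i,j\}} x_{ij}(c)$ for a single circular ordering $c$.

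Next, by the definition~\eqref{e:stspvert} of the incidence vector, $x_{ij}(c)$ equals $1$ if $i$ and $j$ are adjacent in the cyclic list $c$, and $0$ otherwise. A circular ordering of the $n$ leaves is (combinatorially) a Hamiltonian cycle on $n$ vertices, which has exactly $n$ edges, so exactly $n$ unordered pairs $\{i,j\}$ are adjacent. Hence $\sum_{\{i,j\}} x_{ij}(c) = n$ for every consistent $c$.

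Finally, summing this constant value $n$ over the $2^k$ consistent circular orderings gives $\sum_{\{i,j\}} x_{ij}(s) = n\cdot 2^k$, as claimed. There is essentially no obstacle here beyond correctly quoting Theorem~\ref{cyc_vec}; the corollary is a direct count. (As a sanity check one can verify the two extremes: when $k=n-3$ we recover the known BME$(n)$ identity $\sum x_{ij}(t) = n\cdot 2^{n-3}$, and when $k=0$ we recover $\sum x_{ij}(c) = n$ for STSP$(n)$.)
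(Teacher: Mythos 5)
Your proof is correct and follows exactly the paper's own reasoning: the corollary is obtained from Theorem~\ref{cyc_vec} by summing the $n$ unit contributions of each of the $2^k$ consistent circular orderings (tours). Your write-up simply makes the interchange of sums and the edge count explicit.
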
 \begin{proof}The total follows from the fact that the sum of components for any tour on $[n]$ is $n$, the number of edges in the tour.
\end{proof}
Closely related is the following \emph{twisting} lemma, useful in the next Section:
\begin{lemma}\label{twist}
Let $s$ have at least one bridge $b$. Then there exist two ways to add a single split to $s,$ to achieve by those additions two split networks $s'$ and $s''$ each with one less bridge than $s,$ and such that: $$\mathbf{x}(s) = \mathbf{x}(s') + \mathbf{x}(s'').$$
\end{lemma}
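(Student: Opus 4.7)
The plan is to apply Theorem~\ref{cyc_vec} to write $\mathbf{x}(s) = \sum_{c \in C(s)} \mathbf{x}(c)$ over the $2^k$ circular orderings consistent with $s$, and then partition $C(s)$ into two halves of size $2^{k-1}$ each according to the twist state at $b$. Each half will turn out to be the consistent set of a specific externally refined split network $s'$ or $s''$ with $k-1$ bridges, so that summing tour vectors on each side yields the required decomposition.

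To construct $s'$ and $s''$, let $b$ display the split $A|B$ and pass to the associated binary level-1 network $L(s)$, where $b$ is an edge between degree-three internal nodes $u_A, u_B$. The two edges at $u_A$ other than $b$ induce a subclade partition $A = A_1' \sqcup A_2'$, and similarly $B = B_1' \sqcup B_2'$ at $u_B$ (when $u_A$ or $u_B$ sits on a cycle, a cycle-split at that endpoint is chosen to pin down the partition). Fix a reference ordering $c_0 \in C(s)$ whose polygon displays these subclades in cyclic order $A_1', A_2', B_2', B_1'$, so that $A_i'$ meets $B_i'$ at one of the two endpoints of $b$'s diagonal. Now define
\[
\sigma_+ = (A_1' \cup B_1') \mid (A_2' \cup B_2'), \qquad \sigma_- = (A_1' \cup B_2') \mid (A_2' \cup B_1'),
\]
and set $s' := s \cup \{\sigma_+\}$ and $s'' := s \cup \{\sigma_-\}$. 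In the $c_0$-polygon, $\sigma_+$'s diagonal runs from the $A_1'/A_2'$ boundary on the $A$-arc to the $B_1'/B_2'$ boundary on the $B$-arc; its endpoints sit immediately inside those of $b$, so this diagonal crosses $b$ but no other bridge of $s$. Hence $b$ ceases to be a bridge, $\sigma_+$ itself is not a bridge (being crossed), and every other bridge of $s$ persists, leaving exactly $k-1$ nontrivial bridges; external refinement is preserved because any further noncrossing diagonal would have to be noncrossing with all of $s \subset s'$, contradicting the external refinement of $s$. The same reasoning applies to $s''$ via the polygon of a $b$-twisted reference.

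Finally, $\sigma_+$ is consistent with $c \in C(s)$ if and only if $A_1' \cup B_1'$ is contiguous in $c$, which is equivalent to $b$ being in the same twist state in $c$ as in $c_0$. Thus $C(s')$ consists of the $2^{k-1}$ orderings of $C(s)$ agreeing with $c_0$'s twist at $b$, and $C(s'')$ is the complementary half. Applying Theorem~\ref{cyc_vec} to both refinements gives
\[
\mathbf{x}(s) = \sum_{c \in C(s)} \mathbf{x}(c) = \sum_{c \in C(s')} \mathbf{x}(c) + \sum_{c \in C(s'')} \mathbf{x}(c) = \mathbf{x}(s') + \mathbf{x}(s''),
\]
as required. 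The main obstacle is verifying that the added diagonal really crosses only $b$ among bridges of $s$; this rests on a careful endpoint-interleaving argument in the polygon's cyclic order, exploiting that every other bridge has both endpoints on a single one of the $A$- or $B$-arcs bounded by $b$, while $\sigma_\pm$'s diagonal has exactly one endpoint on each arc, placed at the subclade boundaries immediately adjacent to $b$'s endpoints.
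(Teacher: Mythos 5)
Your argument is correct and follows essentially the same route as the paper: the paper builds $s'$ and $s''$ by adding the missing diagonal of a quadrilateral having $b$ as its other diagonal (once with and once without a twist around $b$), which is exactly your $\sigma_\pm$ constructed from the subclades at the two endpoints of $b$, and both proofs then invoke Theorem~\ref{cyc_vec} together with the partition of the consistent circular orderings according to the twist state at $b$. One small imprecision: the endpoints of $\sigma_\pm$'s diagonal sit at the subclade boundaries, which need not be \emph{immediately} adjacent to the endpoints of $b$'s diagonal, but your actual crossing argument (every other bridge nests inside a single subclade and so cannot straddle those boundary points) does not depend on that adjacency.
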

\begin{proof}
Notice that in the polygonal picture of a split network $s$ with $k+1$ bridges, for any given bridge $b$ there is always a way to add a split, (as a new diagonal), which crosses that bridge $b$ but no other existing bridge. This is true since (even if the maximum number of bridges is present), the bridge $b$ can be seen as one of the two diagonals of a quadrilateral which is nested inside the polygon. See Figure~\ref{f:twist}. The other diagonal of that quadrilateral is, of course, missing---so it is always available to become the new split which crosses $b$. That new diagonal crosses no other bridge since a quadrilateral can have only two diagonals.
\begin{figure}[h]
\centering{\includegraphics[width=\textwidth]{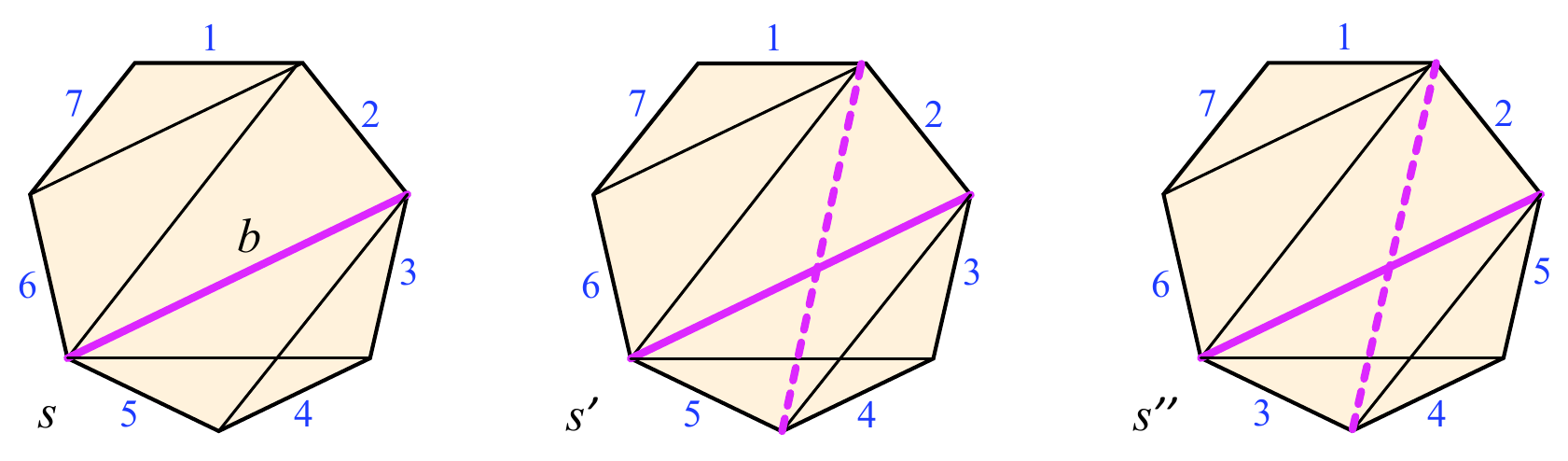}}
\caption {Example of the three split networks described in Lemma~\ref{twist}.}
\label{f:twist}
\end{figure}
 We use this new diagonal to reduce the number of bridges by one. We create $s'$ and $s''$ as  the two possible results of adding that new split which crosses $b$, with and without a twist around $b.$ From Theorem~\ref{cyc_vec} we have that ${\mathbf x}(s')$ is the sum of the vectors ${\mathbf x}(c)$ for all the circular orderings consistent with $s'$, and likewise for $s''$. The key here is that the circular orderings consistent with $s$ are partitioned into those consistent with $s'$ and those consistent with $s''.$

Thus the vector ${\mathbf x}(s)$ is the sum of the two vectors ${\mathbf x}(s')$ and ${\mathbf x}(s'').$

\end{proof}

\section{Metrics and faces}\label{faces} For our proofs about the structure of BME($n,k$), we need weighted networks.
A \emph{weighting} of a split system $s$ is a function $w:s\to\mathbb{R}_{\ge 0}$. In practice each split is assigned a positive weight, since when splits are assigned weight = 0 this system can be equated to the system minus those splits. Given such a weighted split system we can derive a metric $\mathbf{d}_s$ on $[n],$ where $$\mathbf{d}_s(i,j) = \sum_{\i\in A, j\in B} w(A|B)$$ where the sum is over all splits of $s$ with $i$ in one part and $j$ in the other. The metric is often referred to as the distance vector $\mathbf{d}_s.$  We can also derive a weighting on the edges of the 1-nested network, extending  our function $L$ to weighted networks $L(s)$. Here the weight function is from the edges of $L(s)$ to  positive real numbers, and given by $$w_{s}(e) = \sum_{e\in C(A|B)} w(A|B)$$ where the sum is over the splits of $s$ which are represented by a minimal cut containing $e.$ Clearly the sum of weights on a shortest path in $L(s)$ from $i$ to $j$ equals $\mathbf{d}_s(i,j).$ See Figure~\ref{f:weights} for an example.

\begin{figure}[h]
\centering{\includegraphics[width=\textwidth]{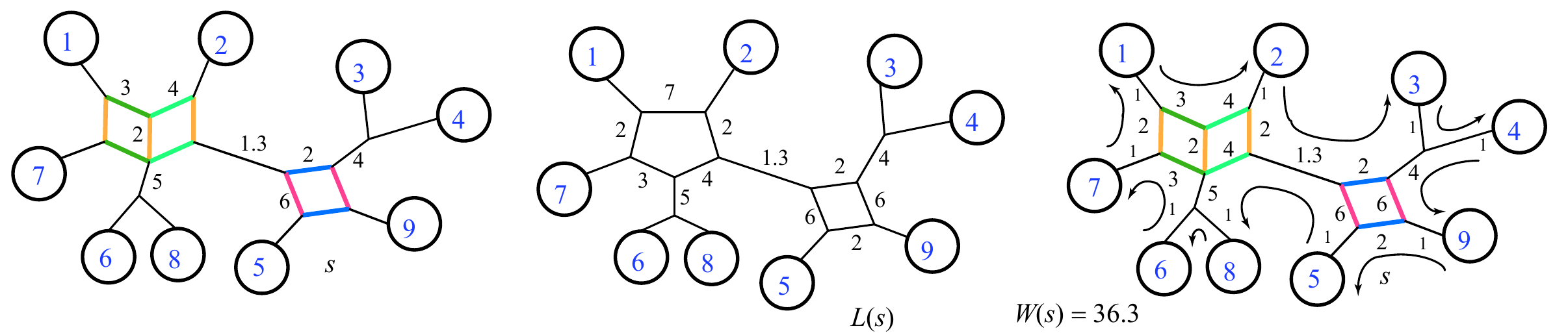}}
\caption {A weighted split network and its associated weighted level-1 network. Here the trivial splits (edges adjacent to leaves) are all given weight 1 for simplicity. The right-most picture is the same weighted network, with a tour shown by arrows. The tour length is twice the total weight, 72.6.  }
\label{f:weights}
\end{figure}

We define the total weight of the network to be the sum of all the weights: $$W(s) = \sum_{A|
B \in s} w(A|B).$$ Now we can still calculate $\mathbf{x}(s)$ for a weighted network, the vector does not depend on the weights. Instead we are interested in the dot product:
\begin{thm}\label{weight} For an externally refined weighted split network $s$, the dot product of our network vector with the distance vector gives a multiple of the sum of the weights: $$\mathbf{x}(s)\cdot \mathbf{d}_s = 2^{k+1}W(s)$$
\end{thm}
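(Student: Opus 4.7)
The plan is to reduce the dot product identity to a sum over tours by invoking Theorem~\ref{cyc_vec}, which expresses $\mathbf{x}(s)$ as $\sum_{c} \mathbf{x}(c)$ over the $2^k$ circular orderings $c$ consistent with $s$. By linearity, it suffices to show that for each such tour $c$ we have $\mathbf{x}(c)\cdot\mathbf{d}_s = 2\,W(s)$; summing then multiplies this by the $2^k$ tours to yield $2^{k+1}W(s)$.

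Next I would unfold the left side for a fixed tour $c$. Since $\mathbf{x}(c)$ is the $0/1$ incidence vector of the pairs adjacent in $c$, the dot product $\mathbf{x}(c)\cdot\mathbf{d}_s$ is exactly the total length (with respect to $\mathbf{d}_s$) of the tour $c$. Using the definition $\mathbf{d}_s(i,j) = \sum_{A|B\in s,\ i\in A,\ j\in B} w(A|B)$ and swapping the order of summation, this becomes
\[
\mathbf{x}(c)\cdot\mathbf{d}_s \;=\; \sum_{A|B\in s} w(A|B)\cdot N_c(A|B),
\]
where $N_c(A|B)$ counts the number of edges of the tour $c$ that go between $A$ and $B$.

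The key geometric step is then to show $N_c(A|B) = 2$ for every split $A|B$ of $s$. Because $c$ is consistent with $s$, both $A$ and $B$ appear as contiguous arcs along the cyclic order $c$; walking once around the cycle, we transition from $A$ to $B$ exactly once and from $B$ to $A$ exactly once, giving two crossing edges. (In the polygonal picture, the diagonal for $A|B$ splits the polygon into two arcs of leaves, and a Hamiltonian tour respecting that partition must cross the diagonal exactly twice.) Plugging $N_c(A|B) = 2$ back in yields $\mathbf{x}(c)\cdot\mathbf{d}_s = 2\sum_{A|B\in s} w(A|B) = 2\,W(s)$, and the identity follows.

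I expect the main obstacle to be making the ``each split is crossed exactly twice'' claim fully rigorous, since one must be careful that contiguity of $A$ and $B$ in $c$ really is equivalent to consistency of $c$ with $s$ for \emph{every} split of $s$, and not only for the bridges. This is where the externally refined/circular hypothesis enters: consistency of $c$ with $s$ means $c$ is a cyclic order on the boundary of a planar drawing of $s$, and every split of $s$ corresponds to a diagonal of the associated $n$-gon, whose endpoints partition the cyclic sequence of leaves into two arcs --- forcing both parts to be contiguous in $c$. Once that observation is spelled out, the remaining steps are purely bookkeeping.
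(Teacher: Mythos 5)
Your proposal is correct and follows essentially the same route as the paper: decompose $\mathbf{x}(s)$ into the $2^k$ consistent tours via Theorem~\ref{cyc_vec}, then observe that each tour's length with respect to $\mathbf{d}_s$ counts every split's weight exactly twice. Your version of the "each split is crossed exactly twice" step is just a more explicitly bookkept form of the paper's argument, so no further comparison is needed.
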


\begin{proof} Since $s$ is circular, given a network diagram for $s$, the distance $\mathbf{d}_s(i,j)$ for adjacent $i$ and $j$ can be found by adding the weights on edges between $i$ and $j$ on the exterior of the diagram. (That is also the same as the weights on the edges of $L(s)$ between them). For a circular ordering $c$ that is consistent with the split system $s$ we see that $\mathbf{x}(c)\cdot\mathbf{d}_s$ is equal to summing the distances between adjacent pairs of taxa, and that this sum includes the weight of each split exactly  twice: it totals to $2W(s)$. See Figure~\ref{f:weights} for an example. Since there are $2^k$ such circular orderings whose vectors sum to $\mathbf{x}(s),$ we have that  $\mathbf{x}(s)\cdot \mathbf{d}_s = 2(2^{k})W(s)$.
\end{proof}

\begin{thm}\label{min} Moreover, if $s'$ is any other binary level-1 network on $[n]$ with $k$ bridges then $\mathbf{x}(s')\cdot \mathbf{d}_s > \mathbf{x}(s)\cdot \mathbf{d}_s.$
\end{thm}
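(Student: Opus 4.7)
The plan is to expand $\mathbf{x}(s')$ using Theorem~\ref{cyc_vec}. Writing $C(s')$ for the set of $2^k$ circular orderings consistent with $s'$, and similarly $C(s)$, we have
\[
\mathbf{x}(s') \cdot \mathbf{d}_s = \sum_{c \in C(s')} \mathbf{x}(c) \cdot \mathbf{d}_s .
\]
The strategy is to bound each summand from below by $2W(s)$ and to pin down precisely when equality holds. This will let me compare the sum against $\mathbf{x}(s)\cdot\mathbf{d}_s = 2^{k+1}W(s)$, which Theorem~\ref{weight} already evaluates.

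For an arbitrary circular ordering $c$, I would unfold $\mathbf{d}_s$ via its definition and interchange sums to obtain
\[
\mathbf{x}(c) \cdot \mathbf{d}_s = \sum_{A|B \in s} n_{A|B}(c)\, w(A|B),
\]
where $n_{A|B}(c)$ counts the edges of the Hamiltonian tour $c$ whose endpoints lie on opposite sides of the split $A|B$. Any tour must cross each split an even number of times and at least twice, and $n_{A|B}(c)=2$ exactly when the block $A$ is a contiguous arc of $c$, i.e., when $c$ is consistent with $A|B$. Since each weight $w(A|B)$ is positive, this yields $\mathbf{x}(c) \cdot \mathbf{d}_s \ge 2W(s)$, with equality if and only if $c$ is consistent with every split of $s$, i.e., $c\in C(s)$.

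Summing over $c \in C(s')$ and applying Theorem~\ref{weight} to $s$ then gives
\[
\mathbf{x}(s') \cdot \mathbf{d}_s \ge 2^{k+1} W(s) = \mathbf{x}(s) \cdot \mathbf{d}_s,
\]
with equality if and only if every $c\in C(s')$ also lies in $C(s)$. Since $|C(s')|=|C(s)|=2^k$, equality would force $C(s')=C(s)$.

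The main obstacle is therefore to rule out $C(s')=C(s)$ when $s'\neq s$. Here I would argue that $C$ recovers the displayed-split system: a split $A|B$ lies in $\Sigma(L(s))$ exactly when $A$ is contiguous in every $c \in C(s)$, with the nontrivial direction using that the maximal split system consistent with a binary level-1 network equals the set of splits respected by every consistent circular ordering. Hence $C(s')=C(s)$ would imply $\Sigma(L(s'))=\Sigma(L(s))$, and invoking the fact from \cite{Gambette2017} that a binary level-1 network is determined by its displayed splits would give $s'=L(s)$, contradicting the hypothesis $s'\neq s$. Consequently the inequality above is strict.
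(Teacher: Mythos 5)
Your argument is correct, and it shares the paper's overall skeleton: expand $\mathbf{x}(s')$ via Theorem~\ref{cyc_vec} as a sum of tour vectors over the $2^k$ consistent circular orderings, bound each tour term below by $2W(s)$, and note that equality in every term would force $s'$ to have the same set of consistent orderings as $s$. Where you genuinely differ is in how the two key steps are justified. The paper proves the tour lower bound geometrically, arguing that a tour visiting leaves out of a consistent order must retrace part of some shortest path in the weighted network $L(s)$; you instead interchange sums and count, for each split $A|B$ of $s$, the number of tour edges crossing the cut, using the parity/subtour-elimination fact that this count is an even number at least $2$, equal to $2$ exactly when $A$ is a contiguous arc. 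This is cleaner, makes the equality case completely explicit, and makes visible that strict positivity of the split weights is what drives strictness. On the final step, the paper dismisses strictness with a parenthetical that the set of consistent orderings is ``determined uniquely by the network''; you correctly identify that what is actually needed is the converse implication (distinct networks have distinct consistent-ordering sets) and sketch it via recovering $\Sigma(L(s))$ from $C(s)$ --- the same injectivity that underlies Corollary~\ref{vert}. One small caution: since the paper notes that $\Sigma$ fails to be injective on general phylogenetic networks, you should state that the injectivity you invoke is specifically for binary level-1 networks with a fixed number of bridges, where it follows from the circular-ordering-plus-noncrossing-diagonals encoding used in Theorem~\ref{t:vertcount}.
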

\begin{proof}
For a consistent circular ordering $c$ we have that $\mathbf{x}(c)\cdot\mathbf{d}_s = 2W(s),$ which is the length of a tour of $[n]$. Indeed it is a minimum length tour for the given metric $\mathbf{d}_s$, since visiting any of the taxa out of this order would involve retracing some part of the path between them. This is most clearly seen by considering the level-1 network $L(s)$ with weighted edges, where the distance between each pair is the minimum length path. See Figure~\ref{f:weights}. The circular ordering minimizing a tour length using those minimum length paths must be consistent with the network, or else some portion of some path between leaves will be traversed twice, increasing the length of the tour.  Thus since the dot product $\mathbf{c}(s)\cdot \mathbf{d}_s$ is minimized for each consistent $c,$ the sum of those products is minimized for the network $s.$ (The set of consistent circular orderings is determined uniquely by the network, and exchanging any $c$ for a non-consistent alternative would increase that term in the sum.)
\end{proof}

The previous two results do have a geometric interpretation, which is:
\begin{corollary}\label{vert}
\label{ccool} The vertices of BME($n,k$) are the vectors ${\mathbf x}(s)$ corresponding to the distinct binary level-1 networks $L(s)$.  That is for each externally refined circular network $s$, with $n$ leaves and $k$ bridges, we get a vertex of the polytope (but it is determined only by $L(s).$)
\end{corollary}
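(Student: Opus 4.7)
The plan is to read this statement off almost immediately from Theorems~\ref{weight} and~\ref{min}. By construction BME($n,k$) is the convex hull of the finite set $\{\mathbf{x}(s)\}$, so its vertex set is automatically a subset of these generators. Two things remain to establish: (i) every $\mathbf{x}(s)$ in this generating set is in fact a vertex rather than a convex combination of the others, and (ii) the map $L(s)\mapsto\mathbf{x}(L(s))$ is injective, so that the vertices are in bijection with binary level-1 networks rather than with externally refined circular split systems.

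For step (i), I would fix an externally refined circular split network $s$ with $n$ leaves and $k$ bridges and assign each of its splits a positive weight (taking weight $1$ on every split is enough). This produces a positive distance vector $\mathbf{d}_s$. Theorem~\ref{weight} computes $\mathbf{x}(s)\cdot\mathbf{d}_s = 2^{k+1}W(s)$, and Theorem~\ref{min} asserts $\mathbf{x}(s')\cdot\mathbf{d}_s > \mathbf{x}(s)\cdot\mathbf{d}_s$ whenever $L(s')\neq L(s)$. Hence the linear functional $\mathbf{y}\mapsto\mathbf{y}\cdot\mathbf{d}_s$ attains its minimum over the generating set uniquely at $\mathbf{x}(s)$, and therefore attains its minimum over all of BME($n,k$) uniquely at $\mathbf{x}(s)$ as well. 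Since a point that uniquely minimizes a linear functional over a polytope is necessarily a vertex, (i) follows.

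Step (ii) uses essentially the same computation: if $L(s)\neq L(s')$, then with $\mathbf{d}_s$ constructed as above, Theorem~\ref{min} yields $\mathbf{x}(L(s'))\cdot\mathbf{d}_s > \mathbf{x}(L(s))\cdot\mathbf{d}_s$, and in particular $\mathbf{x}(L(s))\neq\mathbf{x}(L(s'))$. Combined with the observation, already noted just after the definition of $\mathbf{x}$, that $\mathbf{x}(s)$ depends only on $L(s)$, this gives the desired bijection between the vertices of BME($n,k$) and the binary level-1 networks on $[n]$ with $k$ nontrivial bridges.

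I do not expect a substantive obstacle; the real content sits inside Theorems~\ref{weight} and~\ref{min}, which have already been proven. The only care required is to keep the split network $s$ and its level-1 image $L(s)$ notationally distinct, and to translate the strict-minimization statement of Theorem~\ref{min} into the standard language of extreme points of a polytope.
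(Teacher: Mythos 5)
Your proposal is correct and matches the paper's intended argument: the paper offers no separate proof of Corollary~\ref{vert}, presenting it as the "geometric interpretation" of Theorems~\ref{weight} and~\ref{min}, which is precisely the unique-minimizer-of-a-linear-functional argument you spell out. Your additional care about injectivity of $\mathbf{x}$ on binary level-1 networks and the distinction between $s$ and $L(s)$ is a welcome elaboration of what the paper leaves implicit.
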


Remark:
Levy and Pachter \cite{Pachter2}, generalizing the
work of Semple and Steel \cite{Steel}, define a coefficient $\eta$ which takes values the components of our vector ${\mathbf x}(s)$. For an arbitrary distance vector $\mathbf{d}$, Levy and Pachter  call the dot product $\mathbf{x}(s)\cdot \mathbf{d}$ the \emph{length} of $\mathbf{d}$ with respect to $s.$  They point out that neighbor-net is a greedy algorithm for minimizing this quantity. Our results show how to minimize this length via linear programming. We also see as a consequence that that length is minimized precisely by a binary level-1 network, (or several if the number of bridges is larger than $k$. )

The question is raised: if the vertices of BME($n,k$) correspond to binary level-1 networks, but minimize a length that is a function of the weighted split network, then what role is left for an arbitrarily weighted 1-nested network? In fact, any weighted 1-nested network $N$ has the following property:
\begin{thm}
 If $\mathbf{d}_N$ is the metric on the leaves of $N$ defined by $\mathbf{d}_N(i,j)$ equal to the least sum of weights along a path between leaves $i$ and $j$, then there is a unique circular weighted split system $s= S_w(N)$ which has the same associated metric. That is, $\mathbf{d}_N = \mathbf{d}_s.$
\end{thm}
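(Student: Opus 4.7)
The plan is to construct the unique weighted circular split system $S_w(N)$ piece by piece from the structure of $N$, taking the underlying split system to be $\Sigma(N)$, which is circular since $N$ is 1-nested (via the correspondence with $L$ recalled earlier). For each bridge $e$ of $N$ I would assign to its unique displayed split $A_e|B_e$ the weight $w(e)$; every path in $N$ between leaves of $A_e$ and leaves of $B_e$ must cross $e$, so $w(e)$ enters $\mathbf{d}_N(i,j)$ exactly once for $i\in A_e,\ j\in B_e$, matching the contribution of this split to $\mathbf{d}_s(i,j)$.

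For each cycle $C$ of $N$ with nodes $v_1,\ldots,v_m$ and edge weights $w_1,\ldots,w_m$, edge-disjointness of the cycles of the 1-nested network $N$ guarantees that the restriction of $\mathbf{d}_N$ to $\{v_1,\ldots,v_m\}$ is exactly the shortest-path metric of the weighted cycle graph $(C,w|_C)$. A Bandelt--Dress style decomposition of that cycle metric then assigns unique non-negative weights to the splits obtained by cutting pairs of edges of $C$ (any necessary correction being absorbed into the trivial cycle-node splits), reproducing the cycle metric as a circular split-system metric on $\{v_1,\ldots,v_m\}$. Each such cycle-level split lifts to a split of $[n]$ in $\Sigma(N)$ by grouping leaves according to which cycle vertex's block-cut-tree branch they inhabit. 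To verify $\mathbf{d}_N=\mathbf{d}_s$, I would observe that any shortest $i$-to-$j$ path in $N$ decomposes, by the block-cut-tree structure, into an alternating sequence of bridge-traversals and cycle-arc traversals (each visited cycle entered and exited exactly once along its shorter arc). Each bridge contributes its split weight in both metrics by construction, and each cycle arc contributes its cycle-metric distance, which by the preceding decomposition equals the sum of the lifted cycle split weights that separate the entry and exit leaves. Summing yields $\mathbf{d}_N(i,j)=\mathbf{d}_s(i,j)$ termwise. Uniqueness then follows because any circular weighted split system realizing $\mathbf{d}_N$ must be supported within $\Sigma(N)$ (any extra split would induce distances incompatible with a cyclic ordering consistent with $N$), and on a fixed cyclic ordering the non-negative weights of a circular split system are uniquely determined by the induced metric via a linear inversion.

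The principal obstacle will be the cycle step: confirming that the Bandelt--Dress style decomposition of an arbitrary weighted cycle's shortest-path metric yields genuinely \emph{non-negative} circular split weights. This amounts to a Kalmanson-inequality calculation for cycle shortest-path metrics; it is finite but is where the real work lies and where one must keep track of weight flowing onto the trivial cycle-node splits. A secondary issue, when $N$ is 1-nested but not level-1 so that distinct cycles may share nodes, is verifying that the independent per-cycle decompositions remain mutually consistent; edge-disjointness of cycles makes this reduce cleanly to the block-cut-tree analysis of shortest paths used in the verification step.
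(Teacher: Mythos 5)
Your route is genuinely different from the paper's and is workable in outline, but as written it defers the entire content of the theorem to the step you yourself flag as the ``principal obstacle.'' The paper does not decompose $N$ into blocks at all. It verifies directly that $\mathbf{d}_N$ satisfies the Kalmanson condition with respect to any circular ordering consistent with a planar drawing of $N$: for $i<j<k<l$ in that ordering, the two ``crossing'' shortest paths $i\to k$ and $j\to l$ together dominate each of the path pairs $(i\to j,\,k\to l)$ and $(j\to k,\,i\to l)$, because each of the latter paths can be rerouted along portions of the two crossing paths, switching at their shared segment, and the two paths in either left-hand sum can use at most all of both crossing paths. It then cites the standard theorem (Chapter 6 of Semple--Steel, with neighbor-net as the certified algorithm) that a Kalmanson metric has a \emph{unique} representation as a non-negatively weighted circular split system. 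That single citation delivers existence and uniqueness simultaneously.

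The concrete gap in your plan is that the non-negativity of the cycle-level split weights is not a side calculation to be ``absorbed into the trivial splits'': for a circular split system the weight of the split isolating a consecutive block is an alternating sum of distances over a consecutive quadruple, and its non-negativity \emph{is} the Kalmanson inequality for that quadruple. So localizing to cycle blocks does not avoid the hard step; it only shrinks the graph on which you must carry it out, and you would still need an argument equivalent to the paper's crossing-path comparison, now followed by the lifting and block-cut-tree reassembly bookkeeping. Your uniqueness argument also has a hole: linear inversion determines the weights only once the supporting circular ordering and split system are fixed, and the parenthetical claim that any extra split would be ``incompatible with a cyclic ordering consistent with $N$'' is asserted rather than proved. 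The clean way to close both gaps is exactly the cited uniqueness of split decompositions of Kalmanson (totally decomposable) metrics --- at which point the constructive detour provides an explicit description of $S_w(N)$ but no shortcut to the proof.
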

\begin{proof}
First we show that $\mathbf{d}_N$ obeys the Kalmanson condition: there exists a circular ordering of $[n]$ such that for all $1\le i<j<k<l\le n$ in that ordering, $$
\max\{\mathbf{d}_N(i,j)+\mathbf{d}_N(k,l), \mathbf{d}_N(j,k)+\mathbf{d}_N(i,l)\} \le \mathbf{d}_N(i,k)+\mathbf{d}_N(j,l).
$$
The circular ordering that meets our specifications is just any choice of one of the circular orderings consistent with $N.$ The two paths involved on the right hand side of the condition cross each other. Then since the leaves are on the exterior, the four paths involved on the left hand side of the condition are each bounded above in length by a path made by following first one crossing path and then the other, (switching at the crossroads, after their shared portion.) Two paths in a sum on the left hand side of the condition can at most use exactly all of both the crossing paths, so that the inequality is guaranteed. For example, in the following network $N$  we choose to look at the four taxa 1,2,8,7 in that order.  The crossing paths go from 1 to 8 and 2 to 7, with lengths of 12 and 11 respectively.  The graph edges used by the crossing paths are highlighted. Notice that the shortest path from 1 to 2, length 9, is bounded above by the path from 1 to 2 using highlighted  edges. The other three paths, from 2 to 8, from 8 to 7, and from 7 to 1, all actually  use highlighted edges borrowed from the crossing paths. In this case the inequality becomes
$\max\{19,17\}\le 23$.

\begin{center}
    {\includegraphics[width=2.5in]{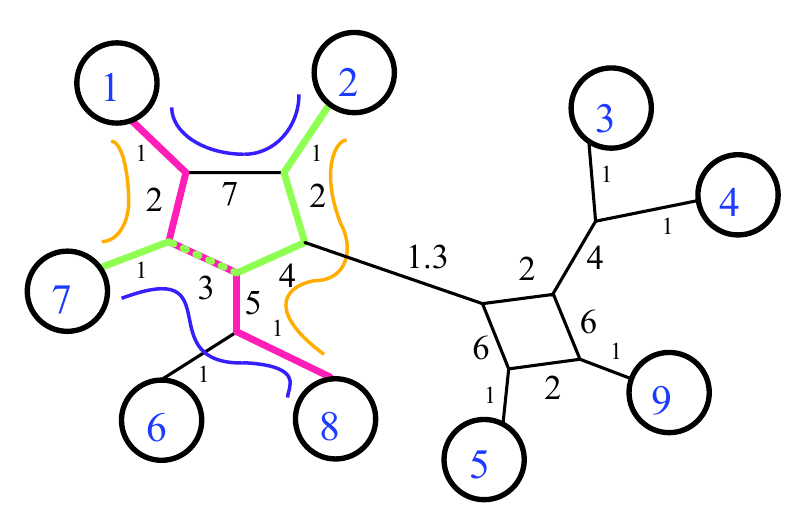}}
\end{center}

It is well known that for any Kalmanson metric $\mathbf{d}_N$ there exists a unique weighted split system $s$ whose weighting gives that metric: $\mathbf{d}_N = \mathbf{d}_s$.  See Chapter 6 in \cite{steelphyl}. To actually calculate this split system, the algorithm neighbor-net can be used; since it is guaranteed to return the unique answer for any Kalmanson metric, as shown in \cite{Bryant2007}.
\end{proof}

Now we show that for any 1-nested network, we get faces of our polytopes. In fact we get multiple faces from each network: one in each of the polytopes BME($n,k$) for which that network has more bridges than $k.$ Precisely:
\begin{thm}\label{t:faces}
Every $n$ leaved 1-nested network $S$ with $m$ bridges corresponds to a face $F_k(S)$
of each BME($n,k$) polytope for $0\le k\le m.$ That face has vertices all the  binary level-1 $k$-bridge networks $S'$ whose splits refine those of $S$, that is such that $\Sigma(S)\subset\Sigma(S')$.  \end{thm}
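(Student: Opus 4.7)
The plan is to exhibit a linear functional on $\mathrm{BME}(n,k)$ whose minimum face is exactly the alleged vertex set of $F_k(S)$. Weight each split of $S$ by $1$, let $\mathbf{d}_S$ be the resulting distance vector with total weight $W(S)=|\Sigma(S)|$, and consider $f(\mathbf{x})=\mathbf{x}\cdot\mathbf{d}_S$ on the vertices $\mathbf{x}(S'')$ of $\mathrm{BME}(n,k)$.

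I would first establish the uniform lower bound $f(\mathbf{x}(S''))\ge 2^{k+1}W(S)$. Expanding $\mathbf{x}(S'')=\sum_c\mathbf{x}(c)$ via Theorem~\ref{cyc_vec}, each $\mathbf{x}(c)\cdot\mathbf{d}_S$ is a tour length in $\mathbf{d}_S$, which equals $\sum_{\sigma\in\Sigma(S)} w(\sigma)\,n_c(\sigma)$, where $n_c(\sigma)$ is the even integer at least $2$ counting how often the tour $c$ crosses $\sigma$. Thus $\mathbf{x}(c)\cdot\mathbf{d}_S\ge 2W(S)$, with equality iff every $\sigma\in\Sigma(S)$ is consistent with $c$, giving the stated bound with equality iff every ordering consistent with $S''$ is consistent with every split of $\Sigma(S)$. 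The easy direction of the characterization then drops out: for any $S'$ with $\Sigma(S)\subset\Sigma(S')$, re-weight $S'$ by putting the $S$-weights on splits in $\Sigma(S)$ and $0$ elsewhere, leaving $\mathbf{d}_{S'}=\mathbf{d}_S$ and $W(S')=W(S)$, so Theorem~\ref{weight} yields $f(\mathbf{x}(S'))=2^{k+1}W(S)$.

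The reverse implication---that equality at $\mathbf{x}(S'')$ forces $\Sigma(S)\subset\Sigma(S'')$---is where the work lies, and I would derive it from the following lemma, whose proof I expect to be the main obstacle: \emph{for any binary level-$1$ network $N$ with $k$ bridges, a split $\sigma$ lies in $\Sigma(N)$ if and only if $\sigma$ is consistent with every one of the $2^k$ circular orderings consistent with $N$.} Granted this lemma, the contrapositive is immediate: if $\sigma\in\Sigma(S)\setminus\Sigma(S'')$, some ordering of $S''$ violates $\sigma$, strictly increasing the tour-length contribution and pushing $f(\mathbf{x}(S''))$ above $2^{k+1}W(S)$.

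To prove the lemma, the ``only if'' direction is immediate from planarity: a minimal cut realizing $\sigma$ in a planar drawing with leaves on the exterior forces both parts to be contiguous arcs. For ``if'', I would induct on $k$. The base case $k=0$ reduces to the classical description of non-crossing splits of a polygon. For the inductive step, pick a bridge $b$ of $N$ with split $X|Y$; twisting $b$ is a free involution on the $2^k$ orderings, and consistency of $\sigma=A|B$ with every ordering forces $\sigma$ compatible with $X|Y$, so after symmetry and the trivial case $A=X$ we may assume $A\subsetneq X$. The orderings of $N$ restrict on $X$ to the orderings of the sub-network $N_X$ obtained by deleting $b$ and smoothing its endpoint (which has fewer than $k$ bridges), so $A|(X\setminus A)$ is consistent with every ordering of $N_X$; by induction it lies in $\Sigma(N_X)$, and any minimal cut witnessing it in $N_X$, together with the bridge $b$ left intact, is a minimal cut of $N$ displaying $\sigma$. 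A mild technicality is that smoothing may collapse a $4$-cycle to a triangle, which can be handled by running the induction in the slightly broader class of $1$-nested graphs allowing $3$-cycles, since triangles contribute no new non-trivial splits. With the lemma established, the set of minimizers of $f$ is exactly $\{\mathbf{x}(S''):\Sigma(S)\subset\Sigma(S'')\}$, which by definition is the face $F_k(S)$ of $\mathrm{BME}(n,k)$.
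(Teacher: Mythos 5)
Your proposal is correct and follows essentially the same route as the paper: minimize the linear functional $\mathbf{x}\cdot\mathbf{d}_S$ obtained from the unit weighting of the splits of $S$, decompose each vertex into its $2^k$ consistent tours via Theorem~\ref{cyc_vec}, and observe that a tour meets the lower bound $2W(S)$ exactly when it is consistent with every split of $\Sigma(S)$. The only difference is one of rigor: where the paper disposes of the equality case by citing the proofs of Theorems~\ref{weight} and~\ref{min}, you isolate and prove (by induction on the number of bridges) the lemma that $\sigma\in\Sigma(N)$ if and only if $\sigma$ is consistent with all $2^k$ orderings of $N$ --- a step the paper leaves implicit, so your more careful treatment is a genuine strengthening rather than a departure.
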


For example: in Figure~\ref{f:subfaces} there are three faces shown. The first two (a) and (b) are in BME(5,0), and the third (c) is in BME(5,1). They are pictured in context in Figure~\ref{f:filter}. Here we include the vector $\mathbf{d}_s$ for each of the three. In (a) we have $W(s) = 7$ and the vertices of BME(5,0) obey $\mathbf{x}(s') \cdot \mathbf{d}_s \ge 14.$  In  In (b) we have $W(s) = 8$ and the vertices of BME(5,0) obey $\mathbf{x}(s') \cdot \mathbf{d}_s \ge 16.$ In (c) we have $W(s) = 7$ and the vertices of BME(5,0) obey $\mathbf{x}(s') \cdot \mathbf{d}_s \ge 28.$ In the figure we show vertices that obey the inequality sharply. To see the strict inequality take dot products with any other vertex vector from the respective polytope.
 \begin{figure}[h]
\centering{\includegraphics[width=\textwidth]{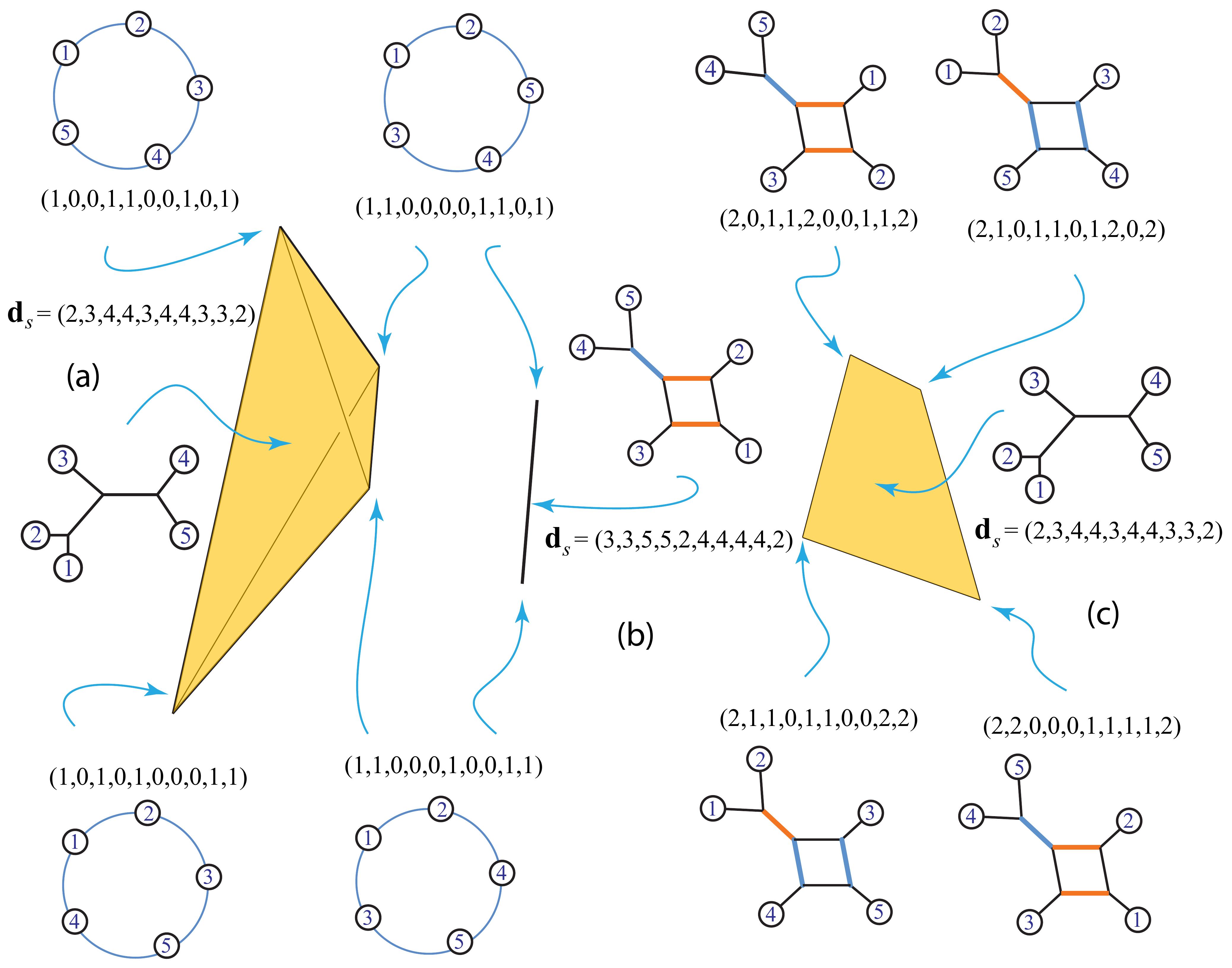}}
\caption {Three subfaces.}
\label{f:subfaces}
\end{figure}

\begin{proof} Without loss of generality we choose a split network $s$ which has the exterior form of $S,$ that is $L(s) = S.$ Let $s$ be weighted by assigning the value of 1 to each split. Then $W(s)$ is the total number of splits in $s$.
Let ${\mathbf d}_s$ be the distance vector derived from that weighting, so that the $i,j$ component of ${\mathbf d}_s$ is the number of splits between those leaves on $s.$ We see that the dot product ${\mathbf x}(s')\cdot{\mathbf d}_s$ is minimized simultaneously at each of the $k$-bridge networks $s'$ which externally refine $s$. In fact we have that the following inequality holds:
$$
{\mathbf x}(s')\cdot{\mathbf d}_s \ge 2^{k+1}W(s)
$$
for all $k$-bridge externally refined networks $s'$, and is an equality precisely when $s'$ refines $s.$
The reason is that ${\mathbf d}_s$ is equivalent to a distance vector derived from $s'$, where the splits are given weight $= 1$ if they are also in $s$, and weight $= 0$ if not. Thus the dot product will equal $2^{k+1}W(s)$ by the proof of Theorem~\ref{weight}, and will be minimized  by the proof of Theorem~\ref{min}.
\end{proof}

\begin{rem}
For a 1-nested network $s$ with $n$ leaves and $k$ bridges, embedded in each
polytope BME($n,j$) for $k > j \ge 0$ is a collection of faces corresponding to networks
  which refine our given network $s$.
Those faces link up (by sharing subfaces) to make an interesting complex, as shown by the shaded subfaces in Figure~\ref{f:filter}.  The topology of these complexes
is an interesting open question.\end{rem}

Two polytopes are called \emph{nested} when one lies inside the other, and the smaller polytope has all its vertices on the surface of the larger.  It turns out that all the level-1 network polytopes are, up to scaling, nested sequentially inside each other. Even more, they are all at the same time nested inside the Symmetric Travelling Salesman polytope. This is pictured in Figure~\ref{bary}. Precisely:
\begin{thm}\label{t:nest}
We can scale the network polytopes so that the polytope BME($n,k$) is nested inside BME($n,k-1$) for $0< k\le n-3.$ Furthermore, we can simultaneously scale all the BME($n,k$) polytopes so that they are all nested inside of STSP($n$) (with vertices at facial barycenters) and each BME($n,k$) is nested inside BME($n,j$) for $j < k.$
\end{thm}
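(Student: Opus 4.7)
The plan is to use the single scaling factor $\frac{1}{2^k}$ for each $\mathrm{BME}(n,k)$, and deduce all the nesting claims directly from Theorem \ref{cyc_vec}, Lemma \ref{twist}, and Theorem \ref{t:faces}. Under this uniform scaling the first assertion—that $\mathrm{BME}(n,k)$ sits inside $\mathrm{BME}(n,k-1)$—becomes the $j=k-1$ case of the broader nesting $\mathrm{BME}(n,k)\subset\mathrm{BME}(n,j)$, and the $\mathrm{STSP}(n)$ nesting is the $j=0$ case, so a single argument handles everything.

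For the STSP portion, I would apply Theorem \ref{cyc_vec} directly: $\mathbf{x}(s) = \sum_{c} \mathbf{x}(c)$ summed over the $2^k$ circular orderings $c$ consistent with $s$, so $\frac{1}{2^k}\mathbf{x}(s)$ is the average of these $2^k$ vertices of $\mathrm{STSP}(n)$. By Theorem \ref{t:faces} applied with second parameter $0$, the tours consistent with $L(s)$ are exactly the vertex set of the face $F_0(L(s))$ of $\mathrm{STSP}(n)$, so $\frac{1}{2^k}\mathbf{x}(s)$ is the barycenter of that face, as the theorem requires. For the general intermediate nesting $\frac{1}{2^k}\mathrm{BME}(n,k) \subset \frac{1}{2^j}\mathrm{BME}(n,j)$ with $0 \le j < k$, I would iterate Lemma \ref{twist}: applied to an externally refined $k$-bridge network $s$ it produces two $(k-1)$-bridge networks whose vectors sum to $\mathbf{x}(s)$; applied again to each of those we get four $(k-2)$-bridge networks, and after $k-j$ iterations we obtain $2^{k-j}$ externally refined $j$-bridge networks $s_i$ with $\mathbf{x}(s) = \sum_i \mathbf{x}(s_i)$. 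Each $s_i$ refines $s$ and so, because $L$ preserves refinement, each $L(s_i)$ is a binary level-1 $j$-bridge network with $\Sigma(L(s))\subset \Sigma(L(s_i))$; by Theorem \ref{t:faces} these $\mathbf{x}(s_i)$ all sit among the vertices of the face $F_j(L(s))$ of $\mathrm{BME}(n,j)$. Therefore
\[
\frac{1}{2^k}\mathbf{x}(s) \;=\; \frac{1}{2^j}\cdot\frac{1}{2^{k-j}}\sum_{i=1}^{2^{k-j}} \mathbf{x}(s_i)
\]
lies in the face $\frac{1}{2^j}F_j(L(s))$ of $\frac{1}{2^j}\mathrm{BME}(n,j)$. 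Taking convex hulls promotes these per-vertex containments to containment of the full scaled polytope, with each scaled vertex of the smaller lying on a genuine face of the larger.

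The main obstacle, I expect, is verifying that the iterated invocation of Lemma \ref{twist} is consistent at every stage: at each step the current network must still possess at least one bridge (so the lemma applies), and the two networks it produces must remain externally refined so that they correspond to honest vertices of the next-smaller-index polytope. The bridge count behaves simply—starting from $k$ and dropping by exactly one per iteration, after $\ell \le k-j$ steps we have $k-\ell \ge j$ bridges and strictly more than $j$ at every prior step, so Lemma \ref{twist} is always available; external refinedness is preserved because every twist step only adds a diagonal to the polygonal representation. A secondary technical check is that $F_j(L(s))$ be a proper face of $\mathrm{BME}(n,j)$ rather than the whole polytope, so its scaled copy really does meet the boundary; this is automatic when $L(s)$ has strictly more than $j$ bridges, which is exactly the regime $j<k$ under consideration.
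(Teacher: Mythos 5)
Your proposal is correct and follows essentially the same route as the paper: the paper likewise uses Lemma~\ref{twist} to write each vertex of BME($n,k$) as a sum of two vertices of BME($n,k-1$) and Theorem~\ref{cyc_vec} to place the scaled vertex at the barycenter of the corresponding face of STSP($n$), with scaling factors ($2^{n-3-k}$ there, $2^{-k}$ here) that agree up to a harmless global constant. Your explicit iteration of Lemma~\ref{twist} down to $j$ bridges, together with the appeal to Theorem~\ref{t:faces} to certify that all $2^{k-j}$ summands lie in a common proper face $F_j(L(s))$, is if anything slightly more careful than the paper's one-step-and-chain argument, since it is what guarantees the scaled vertex lands on the boundary rather than merely inside the larger polytope.
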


\begin{proof}
Consider $0< j < k \le n-3.$ We show that the vertices of the scaled polytope $(2^{n-3-k})$BME$(n,k)$ lie on faces of the scaled polytope $(2^{n-3-j})$BME($n,j$) which in turn lie on the faces of the scaled polytope $(2^{n-3})$STSP($n$).

First, by Lemma~\ref{twist},
any vertex ${\mathbf x}(s)$ of BME$(n,k)$ is the sum of two vertices of BME$(n,k-1),$ found by adding a single split to $s$ in two ways.  Therefore if BME$(n,k-1)$ is first scaled by 2, the sum of those two vertices will also be scaled by 2. Thus the (unscaled) vertex ${\mathbf x}(s)$ of BME$(n,k)$ is the midpoint of those two scaled vertices: after adding, divide by 2. By convexity, this vertex is thus on a face of  (2)BME$(n,k-1).$

Secondly, any vertex ${\mathbf x}(s)$ of BME$(n,k)$  is on the surface of $(2^k)$STSP($n$).  To see this, recall from Theorem~\ref{cyc_vec} that ${\mathbf x}(s)$ is the sum of all ${\mathbf x}(c)$ for $c$ a circular ordering consistent with $s$.  Note that the consistent circular orderings are precisely the vertices of the face of STSP($n$) corresponding to the binary level-1 network $s.$ There are $2^k$ of them. Thus by first multiplying each by $2^k$ and then dividing their sum by $2^k$ we see that the vertex $\mathbf{x}(s)$ is at the barycenter of the corresponding face of $(2^k)$STSP($n$.)

Together these facts show that while each BME($n,k$) is nested in a scaled version of BME($n,k-1$), all are simultaneously nested in a scaled version of STSP($n$). By scaling STSP($n$) by $2^{n-3}$ and each BME($n,k$) by $2^{n-3-k}$ we can see them all nested simultaneously and sequentially. For example see Figure~\ref{bary}.
\end{proof}

\begin{figure}[h]
\centering{\includegraphics[width=\textwidth]{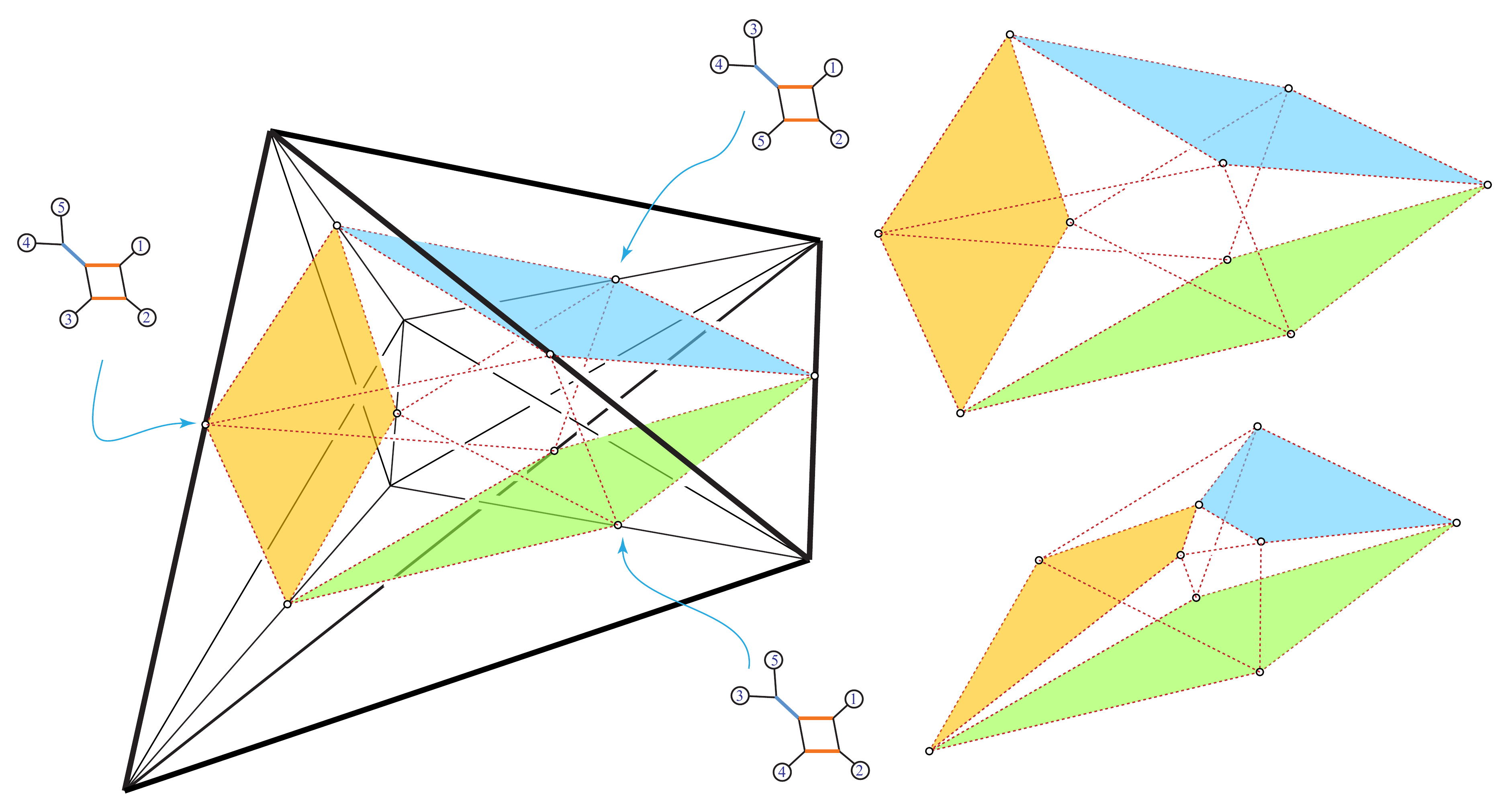}}
\caption{ The facet of STSP(5) pictured is the same one as in Figure~\ref{f:filter}(a), but scaled by a factor of 4. The corners of the shaded quadrilaterals are located at barycenters of faces (edges) of STSP(5). On the right we show just the convex hull of those barycenters, and then a Schlegel diagram obtained by rotating it. The vertices of this convex hull are those of the split facet of BME(5,1) shown in Figure~\ref{f:filter}(b) but scaled by 2.}
\label{bary}
\end{figure}

\begin{thm}\label{cool}
 The dimension of BME($n,k$) is ${n \choose 2}-n.$ The dimension reducing equalities are as follows: For each leaf $j=1,\dots,n$ the vertices ${\mathbf x}(s)$ satisfy $$\sum_{i\in [n] -\{j\}} x_{ij}= 2^{k+1}\, ,$$ where $k$ is the number of \emph{bridges} (non-crossing diagonals)
in the diagram.
\end{thm}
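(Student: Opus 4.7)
The plan is to bracket the dimension of BME$(n,k)$ between $\binom{n}{2}-n$ on both sides. First I will verify the $n$ stated equalities hold at every vertex, then check they are linearly independent to obtain the upper bound $\binom{n}{2}-n$, and finally use the nesting from Theorem~\ref{t:nest} together with the dimension of BME$(n)$ recorded in Section~\ref{poly} for the matching lower bound.

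For the equalities, I would fix a leaf $j$ and a vertex $\mathbf{x}(s)$. By Theorem~\ref{cyc_vec}, $\mathbf{x}(s)=\sum_{c}\mathbf{x}(c)$ summed over the $2^{k}$ circular orderings $c$ consistent with $s$. Each such $c$ is a Hamiltonian cycle on $[n]$ in which $j$ has precisely two neighbors, so $\sum_{i\ne j}x_{ij}(c)=2$. Summing over the $2^{k}$ consistent orderings gives $\sum_{i\ne j}x_{ij}(s)=2\cdot 2^{k}=2^{k+1}$, and convexity carries this to every point of the polytope. For the upper bound on dimension, the coefficient matrix of the $n$ equalities is the vertex-edge incidence matrix of $K_{n}$; this has rank $n$ for $n\ge 3$ (since $K_{n}$ contains a triangle and is therefore not bipartite), so the equalities are linearly independent and the ambient affine subspace they cut out has dimension exactly $\binom{n}{2}-n$.

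The lower bound is the main obstacle. Here I would invoke Theorem~\ref{t:nest}, which supplies positive scaling factors realizing a chain of containments, from which a (positively) scaled copy of BME$(n,n-3)$ sits inside a scaled copy of BME$(n,k)$ for every $k\le n-3$. Since positive scaling preserves the dimension of a polytope, the dimension of BME$(n,k)$ is at least the dimension of BME$(n,n-3)=$ BME$(n)$, which was stated in Section~\ref{poly} to equal $\binom{n}{2}-n$. Together with the upper bound from the previous paragraph this yields the claimed equality. If one wanted a self-contained argument avoiding the cited value for $\dim$ BME$(n)$, the alternative would be to produce $\binom{n}{2}-n+1$ explicitly affinely independent vertex vectors $\mathbf{x}(s)$, for instance by iteratively applying Lemma~\ref{twist} in reverse to the caterpillar tree vertices, but the nesting route is substantially shorter.
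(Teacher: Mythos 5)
Your proof is correct, and its skeleton matches the paper's: the dimension comes from the nesting in Theorem~\ref{t:nest}, and the equalities come from decomposing $\mathbf{x}(s)$ into tour vectors. The differences are in the details and are worth noting. For the equalities, the paper runs an induction on $k$ using Lemma~\ref{twist} (splitting off one bridge at a time, with the degree equations of STSP($n$) as base case), whereas you apply Theorem~\ref{cyc_vec} in one shot: each of the $2^k$ consistent circular orderings is a Hamiltonian cycle in which $j$ has exactly two neighbors, so $\sum_{i\ne j}x_{ij}(c)=2$ and the sum over all $2^k$ orderings is $2^{k+1}$. Your version is the more direct of the two and avoids the induction entirely. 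For the dimension, the paper sandwiches BME($n,k$) between scaled copies of BME($n$) and STSP($n$), both of dimension $\binom{n}{2}-n$, and reads off the dimension from both containments; you keep the containment of BME($n$) for the lower bound but replace the STSP containment for the upper bound by an explicit rank computation: the coefficient matrix of the $n$ degree equalities is the vertex--edge incidence matrix of $K_n$, which has rank $n$ since $K_n$ is connected and non-bipartite for $n\ge 3$. This makes the upper bound self-contained and ties it directly to the stated equalities (the paper's route instead imports the known dimension of STSP($n$)). Both arguments ultimately lean on the previously cited value $\binom{n}{2}-n$ for $\dim\mathrm{BME}(n)$, and neither introduces a circularity, since Theorem~\ref{t:nest} and Theorem~\ref{cyc_vec} are established independently of this result.
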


\begin{proof}
 The dimension results from the nesting property, Theorem~\ref{t:nest}. Since the STSP($n$) and BME($n$) polytopes are both of dimension ${n \choose 2}-n,$ the BME($n,k$) polytopes nested between them must also have that same dimension.

 The equalities generalize the \emph{Kraft} equalities for phylogenetic trees ($k = n-3$), and the \emph{degree} equations for the STSP($n$) for all $n$, ($k=0.$) In fact we can use the latter as base cases for the proof by induction on the number of bridges $k$. Assuming that the equality holds for all $s$ with $n$ leaves and $k$ bridges, we show that it holds for $s$ with $n$ leaves and $k+1$ bridges. Recall from Lemma~\ref{twist} that there are networks $s'$ and $s''$ with $k$ bridges and with $\mathbf{x}(s) = \mathbf{x}(s') + \mathbf{x}(s'').$ By our inductive assumption, both smaller vectors obey the required formula, both with the sum of components $=2^{k+1}.$ Thus after adding them together the resulting sum is $= 2^{k+2}.$
\end{proof}

\subsection{Counting binary level-1 networks}
The \emph{associahedra} $K(n)$ are a sequence of polytopes, one in each dimension. Their faces correspond to sets of non-crossing diagonals in the $n$-sided polygon. In this case the polygon is fixed in the plane, with no rotations or flips allowed. However, we can still use the number of associahedron faces of each dimension to enumerate the vertices of the BME($n,k$) polytopes.

\begin{thm}\label{t:vertcount} The number of vertices of BME($n,k$), and thus the number of binary level-1 networks with $n$ leaves and $k$ non-trivial bridges, with $0 \le k \le  n-3$ is:
$$
\centering{v(n,k) = T(n, k)\frac{(n-1)!}{2^{k+1}}}$$ $$\text{ where } T(n,k) \text { gives the components of the face vector of the associahedron  } K(n). $$\end{thm}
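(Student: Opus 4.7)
The plan is to prove this by a double-counting argument, reducing to counting pairs consisting of a binary level-1 network together with a compatible circular ordering.

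By Corollary~\ref{vert}, the vertices of $\mathrm{BME}(n,k)$ are in bijection with binary level-1 phylogenetic networks on $[n]$ with $k$ non-trivial bridges, so I need only count such networks. I would introduce the set $S$ of pairs $(N,c)$ where $N$ is a binary level-1 network on $[n]$ with exactly $k$ bridges and $c$ is a circular ordering of $[n]$ consistent with $N$, and count $|S|$ in two ways.

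First, counting by network: for each $N$, the twisting argument from the proof of Theorem~\ref{cyc_vec} shows that $N$ has exactly $2^{k}$ consistent circular orderings (obtained by independently twisting around each of the $k$ bridges). Hence $|S| = 2^{k}\,v(n,k)$. Second, counting by circular ordering: since the paper's convention identifies clockwise and counterclockwise readings, there are $(n-1)!/2$ distinct circular orderings. With $c$ fixed, represent it as a labeled $n$-gon (edges cyclically labeled by $c$); then each binary level-1 network consistent with $c$ is uniquely determined by its set of bridges, which appears as a set of $k$ non-crossing diagonals of this $n$-gon, and conversely every such set of diagonals determines a unique binary level-1 network via the map $L$ applied to the externally refined split network obtained by filling in all remaining crossing diagonals inside each region (triangular regions collapse, after smoothing, to degree-3 internal nodes, while $j$-gonal regions with $j\ge 4$ become cycles of length $j$). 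The number of $k$-subsets of non-crossing diagonals in an $n$-gon is $T(n,k)$ by definition of the face vector of $K(n)$, so $|S| = T(n,k)\,(n-1)!/2$. Equating the two counts gives $v(n,k) = T(n,k)\,\dfrac{(n-1)!}{2^{k+1}}$.

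The main obstacle will be making the second bijection rigorous: I need to check that the combinatorial data of $k$ non-crossing diagonals in the $n$-gon really does correspond bijectively to binary level-1 networks with $k$ bridges consistent with the chosen $c$, including the verification that bridges of such a network always form a non-crossing set of diagonals and that the resulting $N$ satisfies the binary, level-1, and $k$-bridge conditions. Once this geometric correspondence is nailed down, the arithmetic is immediate; moreover, substituting the Kirkman--Cayley expression $T(n,k) = \tfrac{1}{k+1}\binom{n-3}{k}\binom{n+k-1}{k}$ and using $(2k+2)!! = 2^{k+1}(k+1)!$ recovers the closed form $\binom{n-3}{k}\dfrac{(n+k-1)!}{(2k+2)!!}$ announced in the introduction.
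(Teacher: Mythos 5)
Your proposal is correct and is essentially the paper's own argument: the paper likewise counts $(n-1)!/2$ circular orderings times $T(n,k)$ choices of non-crossing diagonals and then divides by $2^k$ for the twists around bridges, which is exactly your double count of pairs $(N,c)$. Your phrasing as an explicit double-counting of network--ordering pairs, with the bijection between $k$-sets of non-crossing diagonals and networks consistent with a fixed $c$, is just a slightly more formal packaging of the same construction.
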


 Here, as seen in entry A033282 of \cite{oeis}, $$ T(n,k) =  \left(\frac{1}{k+1}\right){n-3 \choose k}{n+k-1 \choose k}
$$
which allows the simpler count:
$$
v(n,k) = {n-3 \choose k}\frac{(n+k-1)!}{(2k+2)!!}
$$
 Table~\ref{counts} shows the number of vertices of BME($n,k$), (the number of binary level-1 networks with $n$ leaves and $k$ bridges) for small values  of $n,k.$ Note that the cases $k=0$ and $k=n-3$ count the circular orderings and phylogenetic trees of length $n$, respectively.
\begin{proof}
Vertices in the polytope BME($n,k$) correspond to the binary level-1 networks with $n$ leaves and $k$ nontrivial bridges. The bridges in a level-1 network $L=L(s)$ are the same as the bridges in any preimage split network $s.$ We construct a binary level-1 network $N$ as follows. We start with the $n$-sided polygon, label one side
as 1, and then label the remaining sides in any order.  The number of circular orderings is  $(n-1)!/2.$ Then, independently, we choose non-crossing diagonals for the $k$ bridges, counted by $T(n,k)$. Crucially, the side labeled 1 can be though of as the rooted edge so that each of the subdivisions of the polygon counted by $T(n,k)$ is actually a distinct choice. Since we are only counting up to twists around each bridge , we need to divide by $2^k.$ That completes the counting: next we construct the final binary level-1 network $N$ by making a graph cycle out of each region in the subdivided polygon (except the triangular regions, which become tree-like degree-3 nodes), and attaching bridges and leaves according to the labeled polygon edges and diagonals.  Equivalently, we can see the polygonal picture of $\Sigma(N)$ by adding crossing diagonals to each region of our subdivided polygon.
\end{proof}

Figure~\ref{f:constructo} shows the process of constructing a binary level-1 network using a cyclic ordering and a face of the associahedron. In \cite{Pachter2} the authors point out that a vertex of the associahedron, together with a cyclic ordering, corresponds to a phylogenetic tree. We see that the correct extension of that correspondence is to the binary level-1 phylogenetic networks.

\begin{figure}[h]
\centering{\includegraphics[width=\textwidth]{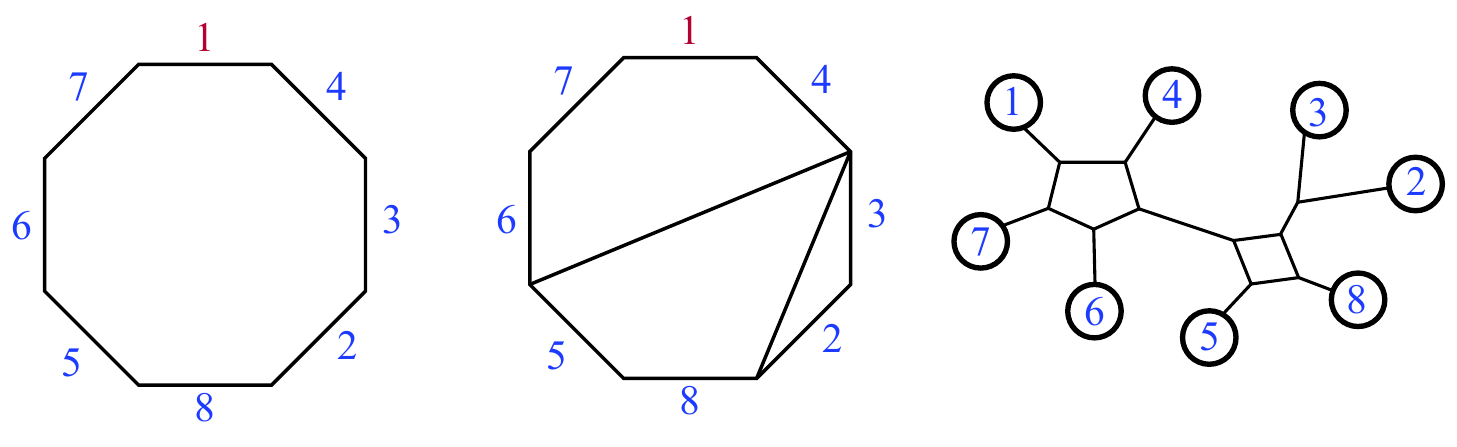}}
\caption {Starting with a cyclic ordering of the edges of the octagon, and adding bridges (to get a face of the associahedron) to construct a binary level-1 network.}
\label{f:constructo}
\end{figure}

 Also note that Semple and Steel use a generating function of three variables to derive a more general formula for counting galled trees in \cite{semple_steel_uni}. Thus their formula implies ours in the case that no cycles of length three are allowed. It is suggestive that their proof method uses Lagrange inversion of the generating function. As shown in \cite{ardila} Lagrange inversion of a series uses the face numbers of the associahedra. Here that fact instead allows us to enumerate the binary level-1 networks with $n$ taxa directly, by summing our formula from $k=0$ to $k=n-3$. The row sums of Table~\ref{counts} are 1, 6, 57, 750, 12645, 260190... which is sequence A032119 of \cite{oeis}. However it does not appear that that sequence has been used to count level-1 networks; rather it is described as counting rooted planar trees with labeled leaves, with equivalence under rotating subtrees left to right at any given branch node. The bijection from binary level-1 networks to these planar trees is straightforward, by replacing our cycles with nodes of higher degree.

\begin{table}[hb!]
\begin{tabular}{|c|cccccccc|}
\hline
$n=$& $k=$& 0&1&2&3&4&5&6 \\
\hline
3&&1&&&&&&\\
4&&3 & 3&&&&&\\
5&&12 &30 &15&&&&\\
6&& 60& 270 &315 & 105&&&\\
7&& 360& 2520   & 5040  & 3780  &945&&\\
8&& 2520&   25200&  75600&  94500&  51975&  10395&\\
9&&20160&   272160& 1134000&    2079000&    1871100&    810810  &135135\\
  \hline
\end{tabular}\caption{Numbers of vertices for the BME($n,k$) polytopes, that is, numbers of binary level-1 phylogenetic networks with $n$ leaves and $k$ bridges. \label{counts}}
\end{table}

\section{Facets}\label{s:facet}

 We have found that many of the known facets of BME($n$) have analogues in BME($n,k$).

\begin{thm}\label{split_facet}
Any split $A|B$ of $[n]$ with $|A|>1$, $|B|>1$ corresponds to a face of BME($n,k$), for all $n,k$ with $ k\le n-3$.  The vertices of that face are the binary level-1
 networks which display the split $A|B.$ Furthermore, if $|A|>2$, $|B|>2$, the face is shown to be a facet of the polytope.
 The face inequality is:
 $$
 \sum_{i,j\in A} x_{ij} \le (|A|-1)2^k.
 $$\end{thm}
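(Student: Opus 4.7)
The plan splits into proving the inequality with equality characterization (establishing the face) and then verifying the face is a facet when $|A|,|B|>2$.

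For the face, I would use Theorem~\ref{cyc_vec} to write $\mathbf{x}(s) = \sum_c \mathbf{x}(c)$ as a sum over the $2^k$ circular orderings $c$ consistent with $s$. For each such $c$, the quantity $\sum_{i,j\in A} x_{ij}(c)$ counts pairs of $A$-elements adjacent in $c$; if $A$ appears as $p$ maximal contiguous runs in $c$, this count equals $|A|-p\le|A|-1$, with equality iff $A$ is a single contiguous block. Summing across the $2^k$ consistent orderings gives $\sum_{i,j\in A} x_{ij}(s)\le(|A|-1)2^k$, with equality precisely when every consistent $c$ has $A$ contiguous. The latter is equivalent to $L(s)$ displaying $A|B$: if a non-trivial bridge or a pair of cycle edges separates $A$ from $B$ in the network, then $A$ is contiguous in every consistent planar drawing, and conversely if $A$ is always contiguous the network admits such a separating minimal cut. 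This establishes the face and identifies its vertices.

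For the facet claim, I need $\dim F = \binom{n}{2}-n-1$. For $k=0$ this recovers the classical subtour-elimination facet of STSP$(n)$, whose facet status for $|A|,|B|\ge 3$ is well known. For $k\ge 1$, I would argue by exhibiting $\binom{n}{2}-n-1$ affinely independent vertex vectors in $F$. Starting from a chosen base vertex $s_0\in F$ and generating new vertices by local modifications---NNI-type moves and bridge re-placements within the $A$-clade, independently within the $B$-clade, moves that redistribute a bridge between the two sides of the split cut, and moves that convert a bridge-realization of the split into a cycle-realization---produces a large family of explicit differences $\mathbf{x}(s_i)-\mathbf{x}(s_0)$. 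The hypothesis $|A|,|B|\ge 3$ is essential because a cherry side admits no internal NNI moves and collapses the expected dimension.

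The main obstacle is verifying linear independence of these difference vectors, which is equivalent to showing that any linear equation $\sum c_{ij}x_{ij}=d$ satisfied by every vertex of $F$ is a combination of the split equation and the Kraft equalities of Theorem~\ref{cool}. The approach mirrors the facet arguments for BME$(n)$ in \cite{forcey2015facets} and \cite{splito}: for each minimal move $s\to s'$, compute $\mathbf{x}(s)-\mathbf{x}(s')$ explicitly using the $2^{k-b_{ij}}$ weights and deduce constraints on the $c_{ij}$. Because $A$-moves and $B$-moves affect essentially disjoint blocks of the coordinate system (the components $x_{ij}$ with both indices in $A$, both in $B$, or straddling the split), the bookkeeping decouples into two nearly independent sub-problems that reduce to the tree facet case handled in \cite{forcey2015facets}, plus a controlled coupling term coming from the uniform factor of $2$ reduction applied to every cross-component $x_{ij}$ with $i\in A$, $j\in B$.
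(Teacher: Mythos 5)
Your first part---the inequality and its equality characterization---is essentially the paper's own argument: decompose $\mathbf{x}(s)$ into the $2^k$ consistent circular orderings via Theorem~\ref{cyc_vec}, observe that each ordering contributes at most $|A|-1$ to $\sum_{i,j\in A}x_{ij}$ with equality iff $A$ is contiguous, and sum. That part is fine.

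The facet (dimension) claim is where your proposal has a genuine gap. You propose to exhibit $\binom{n}{2}-n-1$ affinely independent vertices of $F$ by local moves and then verify that every linear relation on $F$ is a combination of the split equation and the Kraft equalities; but this verification is the entire content of the facet statement, and you defer it to an unexecuted computation. The claimed ``decoupling'' into two tree-like subproblems handled as in \cite{forcey2015facets} is not obviously valid here: the vertices of $F$ are level-1 networks in which the $k$ bridges can be distributed on either side of the split, and the split itself can be realized either by a bridge or by a pair of cycle edges, so the moves you list do not cleanly separate into independent $A$-side and $B$-side blocks. Your heuristic for why $|A|,|B|>2$ is needed (``a cherry side admits no internal NNI moves and collapses the expected dimension'') is also not quite right: the paper records that size-2 splits \emph{do} give facets at $k=0$ and conjecturally for all $k<n-3$, failing only at $k=n-3$.

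The paper avoids all of this with a short interpolation argument that your proposal misses. By Lemma~\ref{twist}, every vertex $\mathbf{x}(s)$ of $F_A(n,k)$ is the sum $\mathbf{x}(s')+\mathbf{x}(s'')$ of two vertices of $F_A(n,k-1)$ (since $s'$ and $s''$ still display $A|B$), and sums of pairs of vectors from a fixed face lie in an affine subspace of the same dimension as that face; hence $\dim F_A(n,k)\le \dim F_A(n,k-1)$. The dimension is therefore non-increasing in $k$, and it equals $\binom{n}{2}-n-1$ at both endpoints---$k=0$ (the subtour-elimination facets of STSP$(n)$) and $k=n-3$ (the split facets of BME$(n)$ from \cite{forcey2015facets}), both requiring $|A|,|B|>2$---so it is constant and every intermediate face is a facet. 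If you want to salvage your direct approach you must actually carry out the affine-independence bookkeeping for level-1 networks; otherwise the monotone-sandwich argument via Lemma~\ref{twist} is the missing idea.
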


\begin{proof}
First we show that the collection of vertices corresponding to networks displaying a split $A|B$ of $[n]$ obey our linear equality, and that all other vertices obey a corresponding inequality.

 The equality follows from similar logic to that in the proofs of Theorem~\ref{cyc_vec} and Corollary~\ref{compsum}. If a network displays a split $A|B$ then so must every circular ordering of $[n]$ consistent with that network. There are $2^k$ such circular orderings. If a circular ordering $c$ displays the split $A|B$ then it must have the leaves of $A$  contiguous in that circular ordering. Thus the components $x_{ij}(c)$ with $i,j$ both corresponding to leaves from $A$ will contain exactly $|A|-1$ entries that are equal to 1, for the edges connecting those leaves. The equality follows: for $s$ displaying the split $A|B$ we have $\sum_{i,j\in A} x(s)_{ij} = (|A|-1)2^k.$

The strict inequality holds for any binary level-1 network which does not display the split $A|B.$ The same reasoning as above holds, but this time there will be at least one circular ordering $c$ which does not display the split. In that circular ordering, the components corresponding to the leaves from $|A|$ have fewer 1's than the maximum $|A|-1$, so the sum will be less.

Next we show that the face $F_A(n,k)$ just described is of codimension 1 when both parts of the split are larger than 2. The polytopes are all of dimension  ${n \choose 2} - n$, so we show the facets are of dimension 1 less.
We use the fact that for any polytope, its scaling by $m$ is of the same dimension. Equivalently, taking sets of $m$ vectors all from the same facet of a polytope, the vector sums of $m$ such vectors will all lie in an affine space of the same dimension as that facet. (Thus any subset of those sums of $m$ vectors each will have a convex hull of smaller or equal dimension than the original facet.)

We have that a given split, with both parts larger than 2, corresponds to a subtour-elimination facet of STSP($n$) = BME($n,0$) and also to a split-facet of BME($n,n-3$) = BME($n$). Both are dimension ${n\choose 2}-n-1.$ From Lemma~\ref{twist}, we know that each vertex $\mathbf{x}(s)$ of the proposed split-facet $F_A(n,k)$ is the vector sum $\mathbf{x}(s')+ \mathbf{x}(s'')$. The  two summands are both vertices of the proposed split-facet $F_A(n,k-1),$ since $s'$ and $s''$  both display the split $A|B.$


Therefore the facet $F_A(n,k)$ cannot be of greater dimension than the  facet $F_A(n,k-1).$ Since the dimension cannot increase at any step between $k=0$ and $k=n-3$, and it has the same value for $k=0$ and for $k=n-3$,  then it must remain constant for each $k$ at ${n \choose 2} - n - 1.$
\end{proof}

For the case of splits with one part of size two, we know that these do correspond to a facet when $k=0$, the STSP($n$), but not when $k=n-3$, in BME($n$). It is an open question for which other $n,k$ the splits of size two correspond to facets of BME($n,k$.) We conjecture this for all $k<n-3,$ but we can only report the positive result for $n=5.$

Figure~\ref{f:filter}(a) shows a subtour elimination facet  of STSP(5) = BME(5,0), corresponding to the split $s=\{\{1,2\},\{3,4,5\}\}.$ In this case it is combinatorially equivalent to the $4D$ Birkhoff polytope.  Split networks label subfaces of this facet. Figure~\ref{f:filter}(b) shows the corresponding split-facet of BME(5,1), in which the vertices are nine networks with a single bridge that each refine $s$. Figure~\ref{split_facet} shows the same split-facet, with alternate labels. Figure~\ref{f:filter}(c) shows the 2D face of BME(5) corresponding to the same split. Summing either of the horizontal or  vertical pairs of vectors shown in (a) gives the four vectors shown in (b).  Summing all four vectors in (a) gives the vector shown in (c).

Next we look at the existence of lower bound faces, and conjecture that they are in fact facets as well.

\begin{thm}\label{lb}
For each pair $i,j \in [n]$ we get a face of BME($n,k$) for all $n,k$ with $ k\le n-3$. For $k=n-3$ these are the caterpillar facets. For $k\le n-4$ these contain the networks with no consistent circular orderings such that $i,j$ are adjacent. The face inequality for each of these latter is $x_{ij}\ge 0.$
 \end{thm}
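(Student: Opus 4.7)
The plan is to treat the two regimes of $k$ separately, since the face inequality differs.

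For $k=n-3$, the vertices of BME$(n,n-3)$ correspond to binary phylogenetic trees and equation~\eqref{e:bmevert} gives $x_{ij}(t)=2^{n-3-b_{ij}}$. Since the unique $i$-to-$j$ path in a tree traverses at most $n-3$ non-trivial bridges, we have $b_{ij}\le n-3$, hence $x_{ij}(t)\ge 1$. Equality holds precisely when every non-trivial bridge lies on the path, i.e., when $t$ is a caterpillar with $i$ and $j$ at its two extreme positions. The resulting face is the caterpillar facet from the first row of Table~\ref{facts}.

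For $k\le n-4$, the inequality $x_{ij}\ge 0$ is immediate from the definition~\eqref{e:bmenkvert}, since each component is either a positive power of $2$ or $0$. What must be verified is that this inequality is tight at some vertex: we need a binary level-1 network $s$ with exactly $k$ non-trivial bridges such that no consistent circular ordering with $s$ has $i$ and $j$ adjacent. The strategy is to place $i$ and $j$ at non-adjacent attachment points of a common cycle $C$ of $s$, with at least one leaf (or an attached subnetwork) on each arc of $C$ between them. A twist about any bridge only reflects the pendant substructure at that bridge, so it preserves the cyclic order of attachments around $C$; hence $i$ and $j$ remain separated in each of the $2^k$ consistent circular orderings, forcing $x_{ij}(s)=0$.

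The remaining obstacle is realizing this template with exactly $k$ non-trivial bridges for every $0\le k\le n-4$. For $k=0$, take $s$ to be a single $n$-cycle with labels chosen so that $i$ and $j$ are not cyclically adjacent, which is possible since $n\ge 4$. For $1\le k\le n-4$, take $s$ to have a $4$-cycle with cyclic attachments $(i,A,j,B)$, where $A$ and $B$ are rooted binary level-1 subnetworks containing the remaining $n-2$ leaves with $m_A=|A|\ge 1$ and $m_B=n-2-m_A\ge 1$. A direct bridge-count shows that a subnetwork on $m\ge 2$ leaves contributes one external non-trivial bridge to $s$ plus between $0$ and $m-2$ internal ones (ranging from a single large cycle up to a fully bifurcated tree), while a subnetwork on $m=1$ leaf contributes none. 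Tuning $m_A$, $m_B$, and the internal shapes of $A$ and $B$ thus realizes every $k\in\{1,\ldots,n-4\}$; since twisting any bridge internal to $A$ or $B$ reflects only a substructure pendant on a single bridge and fixes the cyclic order $(i,A,j,B)$ around the $4$-cycle, each resulting network $s$ has $x_{ij}(s)=0$ as required.
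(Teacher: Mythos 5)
Your proof is correct and follows essentially the same route as the paper: the validity of $x_{ij}\ge 0$ is immediate from the definition of the vertex vectors, equality characterizes exactly the networks with no consistent circular ordering making $i$ and $j$ adjacent, and the $k=n-3$ case reduces to the known caterpillar facets. You go further than the paper's two-sentence proof by explicitly constructing, for every $0\le k\le n-4$, a binary level-1 network with $x_{ij}=0$ (via the bridge count on the pendant subnetworks of a $4$-cycle separating $i$ from $j$), which verifies the non-emptiness of the face that the paper leaves implicit.
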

\begin{proof}
For $k\le n-4$ the equality $x_{ij} =  0$ clearly holds by definition for networks with no consistent circular orderings such that $i,j$ are adjacent. For any network which possesses a consistent circular ordering with $i,j$ adjacent, we see the component $x_{ij} > 0.$
\end{proof}

We would like to know which of these lower bound faces are facets. They are facets for the case of $k=0,$ the Symmetric Travelling Salesman polytopes, and for $k=n-3,$ the Balanced Minimum Evolution polytopes. For the case of $k=n-3$ however the caterpillar facets have vertices that obey $x_{ij}\ge 1.$ In fact none of their vector components are zero, so they are not the sum of a pair of lower bound face vertices from a given lower bound face. However the lower bound face is a facet for $n=5, k=1$, and we conjecture this is true for all lower bound faces.

\section{BME(5,1)}\label{5:1}
We have investigated more fully the case of BME(5,1), by using polymake to find all the facets and then observing the patterns they obey. There are 62 facets altogether, of four different types. The $(5,1)$ networks are especially simple: each has the same underlying graph, with a cherry clade attached to a length-4 cycle with one central leaf across from the cherry.

\begin{thm}\label{2splits}
In BME(5,1) there are ${5 \choose 2} = 10$ split facets, each of which has nine vertices.
\end{thm}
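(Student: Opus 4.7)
The plan is to identify the 10 faces via Theorem~\ref{split_facet}, count their vertices using the explicit combinatorial structure of binary level-1 networks with $5$ leaves and $1$ bridge, and then verify that each such face is actually a facet.

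First, every nontrivial split of $[5]$ partitions it into parts of sizes $2$ and $3$, so there are exactly ${5 \choose 2}=10$ such splits, one per choice of the $2$-element part $A$. By Theorem~\ref{split_facet} each split corresponds to a face of BME(5,1) supported by the equation $\sum_{i,j\in A} x_{ij} = (|A|-1)\,2^{k}=2$, whose vertices are precisely the binary level-1 $(5,1)$-networks displaying the split.

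Second, I would enumerate the displaying networks via the unique combinatorial form of a binary level-1 $(5,1)$-network: a cherry of two leaves, attached by the single nontrivial bridge to a 4-cycle that carries the remaining three leaves as one \emph{central} leaf across from the bridge attachment and two indistinguishable \emph{side} leaves. Such a network displays exactly three nontrivial splits, namely the bridge split (cherry vs.\ the other three) and two cycle splits, each of the form $\{\text{central},\text{side}\}\,|\,\{\text{cherry},\text{other side}\}$. Partitioning by how the fixed $A$ arises: if $A$ is the cherry there are $3$ choices of central leaf from $B$, giving $3$ networks; if $A$ comes from a cycle split, picking which element of $A$ is central ($2$ options) and the cherry as a $2$-subset of $B$ ($3$ options) yields $6$ networks. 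Together this produces $3+6=9$ vertices per face, matching the claim.

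Third, I would verify that each of the $10$ faces has dimension $4$, which by Theorem~\ref{cool}---since BME(5,1) has dimension ${5 \choose 2}-5=5$---establishes facet status. Because the $S_5$-action on leaves transitively permutes the ten faces, it suffices to check a representative, say $A=\{1,2\}$, by exhibiting five affinely independent vertices among its nine. The main obstacle is that the facet argument of Theorem~\ref{split_facet} requires $|A|,|B|>2$, which fails at $n=5$; so one cannot simply transfer dimension from $k=n-3$ down. A clean theoretical substitute uses Theorem~\ref{t:nest}: each vertex of BME(5,1) is the midpoint of an edge of scaled STSP(5) whose endpoints are the two tours consistent with its underlying network, and both tours lie on the corresponding 2|3 subtour-elimination facet of STSP(5), which is $4$-dimensional. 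It then suffices to verify that the nine midpoint-images do not collapse to a proper affine subspace---a finite rank check consistent with the polymake verification referenced at the start of Section~\ref{5:1}.
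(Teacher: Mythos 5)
Your proposal is correct and follows essentially the same route as the paper: the ten faces and their nine vertices are obtained exactly as in the paper's enumeration (your $3+6$ breakdown of the count agrees with the paper's $3+3+3$), and, like the paper, you ultimately rely on a finite computational check (the paper's polymake inspection, your affine-rank verification of the nine vertex vectors) to certify that each face is $4$-dimensional, since the general argument of Theorem~\ref{split_facet} does not apply when $|A|=2$.
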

\begin{proof}These are predicted to be faces by Theorem~\ref{cool}, but it is a surprise that they are indeed facets since each nontrivial split of $[5]$ has a part of size 2.  We check that they are facets by inspection in polymake.    Let the split be $\{a,b\}|\{c,d,e\}$. Each split facet in BME($5,1$) has nine vertices: These are the networks formed by 3 ways to put $\{a,b\}$ on the cherry with a choice of the other 3 on the central leaf, 3 ways to put $a$ on the central leaf, and 3 ways to put $b$ on the central leaf. \end{proof}
From polymake, we find that these lower bound facets are each a product of two triangles. For an example see the split facet for the split $\{1,2\}|\{3,4,5\}$ pictured in Figure~\ref{split_facet}, where the inequality is $x_{1,2} \le 2$. Compare to Figure~\ref{f:filter} where the same facet is shown labeled by polygons.

\begin{figure}[h]
\centering{\includegraphics[width=\textwidth]{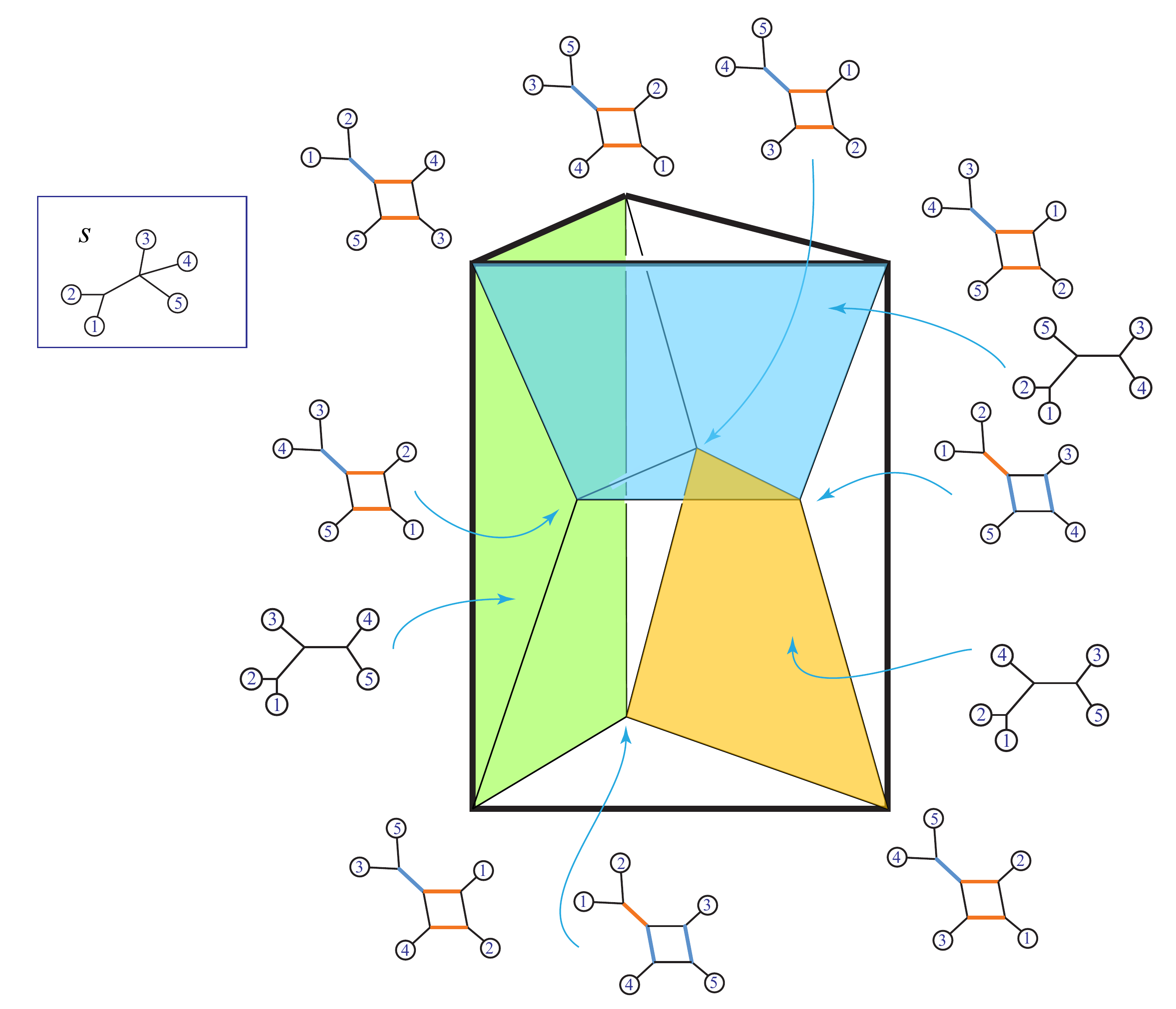}}
\caption {A split facet in BME(5,1). These vertices obey $x_{1,2} = 2$}
\label{split_facet}
\end{figure}

\begin{thm}\label{lb_51} In BME(5,1) there are 10 lower bound facets, one for each component of $\mathbf{x}(s)_{ij}.$ Each has nine vertices.
\end{thm}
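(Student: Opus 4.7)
The plan is to combine the structural description of $(5,1)$ networks from Theorem~\ref{2splits} with the adjacency interpretation of Theorem~\ref{cyc_vec}, and then appeal to polymake for the dimension check. First I would recall that every binary level-1 network with $5$ leaves and $1$ nontrivial bridge is determined by three pieces of data: an unordered cherry pair $\{a,b\}$, a central leaf $c$ lying opposite the bridge on the 4-cycle, and the remaining unordered side pair $\{d,e\}$. This parametrization accounts for the $v(5,1)=30$ vertices tabulated in Table~\ref{counts}. By Theorem~\ref{cyc_vec}, $x_{ij}(s)$ equals the number of the $2^1=2$ circular orderings consistent with $s$ in which $i,j$ are adjacent, and these two orderings (related by the twist around the unique bridge) can be written as $(a,b,d,c,e)$ and $(a,b,e,c,d)$. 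A direct tabulation of consecutive pairs shows that the only unordered pairs of leaves never adjacent in either ordering are $\{a,c\}$, $\{b,c\}$, and $\{d,e\}$; equivalently, $x_{ij}(s)=0$ precisely when either $\{i,j\}$ is the side pair of $s$, or one of $i,j$ is the central leaf and the other lies in the cherry.

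Next, I would count the vertices on each lower bound face $F_{ij}=\{x\in\mathrm{BME}(5,1):x_{ij}=0\}$. Fixing $\{i,j\}$, there are $\binom{3}{2}=3$ networks in which $\{i,j\}$ is realised as the side pair (the cherry is any 2-subset of the remaining three leaves, and $c$ is then forced), plus $2\cdot 3 = 6$ networks in which one of $i,j$ is the central leaf and the other is placed in the cherry (two choices for which of $i,j$ is central, and three choices for the second cherry leaf). This yields $3+6=9$ vertices on $F_{ij}$, uniformly across all $\binom{5}{2}=10$ pairs, so there are at most ten candidate lower bound facets, each with the asserted nine vertices.

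The remaining step, and the main obstacle, is to confirm that each $F_{ij}$ has the full expected dimension $\binom{5}{2}-5-1=4$ and is therefore a facet rather than a lower-dimensional sub-face. Following the precedent set by Theorem~\ref{2splits}, I would discharge this in polymake by enumerating the facet-defining inequalities of $\mathrm{BME}(5,1)$ and checking that each of the $10$ coordinate inequalities $x_{ij}\ge 0$ appears among them. A manual certification would instead require showing that the nine explicit vertex vectors above, restricted to the coordinate hyperplane $x_{ij}=0$, affinely span a $4$-flat in $\mathbb{R}^{10}$; this is feasible, but the cleanest and most uniform argument over all ten pairs is the same computational check already used for the split facets.
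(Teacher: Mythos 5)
Your proposal is correct and follows essentially the same route as the paper: both enumerate the nine vertices of each face $x_{ij}=0$ via the cherry/central-leaf/side-pair structure of $(5,1)$ networks (your $3+6$ split matches the paper's $3+3+3$), and both ultimately certify facet-hood by the polymake computation rather than a hand argument. Your explicit derivation, via Theorem~\ref{cyc_vec}, of exactly which coordinates vanish is a slightly more detailed justification of the vertex count than the paper gives, but it is not a different method.
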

\begin{proof} These are predicted to be faces by Theorem~\ref{lb}, but here again we see they are indeed facets via polymake. Each lower bound facet in BME($5,1$) has nine vertices: three ways to put $i$ on the central leaf and $j$ on the cherry with one of the other three taxa, three ways to put $j$ on the central leaf, and three ways to put $i,j$ non-adjacent on the 4-cycle with one of the other three on the central leaf.
\end{proof}

From polymake, we find that these lower bound facets are each a product of two triangles. For an example see the lower bound facet for the inequality $x_{1,2} \ge 0$ pictured in Figure~\ref{lb_facet}.

\begin{figure}[h]
\centering{\includegraphics[width=\textwidth]{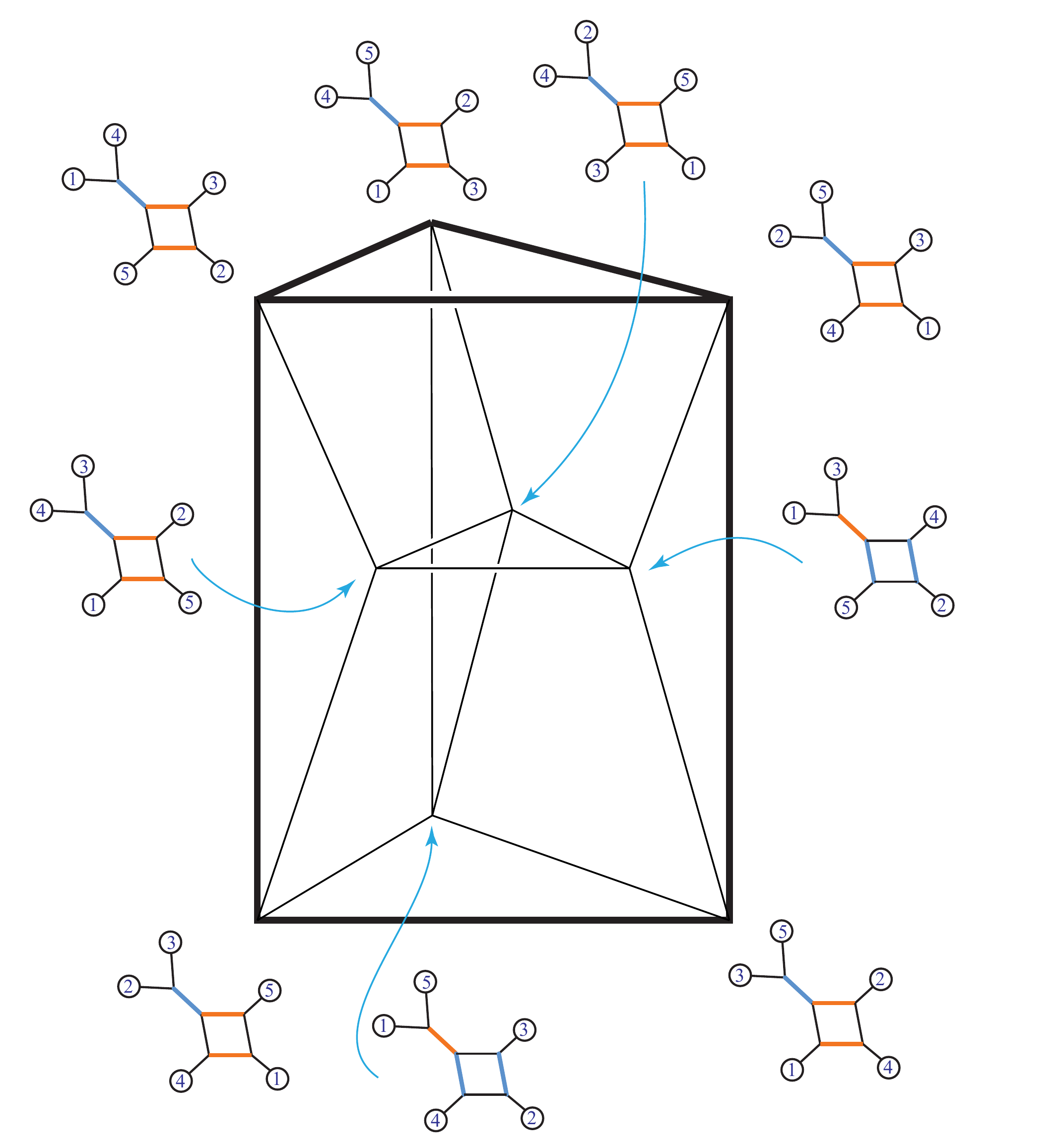}}
\caption {A lower bound facet in BME(5,1). These vertices obey $x_{1,2} = 0.$ }
\label{lb_facet}
\end{figure}

\begin{thm}\label{exclud} In BME(5,1) there are 30 facets which we call the \emph{excluded node} facets. They have 8 vertices each, which obey (sharply) the facet inequality: $$x_{ab} + x_{cd}-x_{ac}-x_{bd} \le 3$$ where $a,b,c,d,$ are four taxa in cyclic order.
\end{thm}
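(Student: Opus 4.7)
The plan is to verify the excluded-node inequality on all 30 vertices of BME(5,1), identify the 8 tight vertices, and then (as in the preceding theorems of this section) appeal to polymake to confirm that these 8 vectors span a $4$-dimensional face, which equals $\dim(\mathrm{BME}(5,1))-1$. The count of 30 such facets follows from enumerating the triples consisting of an excluded taxon, a dihedral class of cyclic orderings of the remaining four taxa, and a choice of which of the two opposite pairs of adjacent edges receives the $+$ sign, yielding $5 \cdot 3 \cdot 2 = 30$.

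The key input is the explicit form of $\mathbf{x}(s)$ for a generic vertex of BME(5,1). Because every such $s$ is a binary level-1 network with one $4$-cycle attached by a bridge to a cherry, its components are determined by the triple (cherry $\{x,y\}$, central leaf $C$, sides $\{B,D\}$): directly from the definition, the pair $\{x,y\}$ and the two cycle-adjacent pairs $\{B,C\}, \{C,D\}$ take value $2$, the four cherry-side pairs take value $1$, and the three pairs $\{x,C\}, \{y,C\}, \{B,D\}$ take value $0$. Using this table, for the inequality $x_{ab}+x_{cd}-x_{ac}-x_{bd}\le 3$ with $a,b,c,d$ in cyclic order, I would separate into the only two ways the left-hand side can reach $3$: either $\{x_{ab},x_{cd}\}=\{1,2\}$ with $x_{ac}=x_{bd}=0$, or $x_{ab}=x_{cd}=2$ with $\{x_{ac},x_{bd}\}=\{0,1\}$. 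Each case forces specific assignments of the four taxa to cherry/central/sides, and a finite enumeration produces exactly eight tight networks --- four with the excluded taxon in the cherry and four with it on a side, and none with the excluded taxon as the central leaf. The remaining 22 networks are then checked to give value at most $2$, so the inequality is strict off this face.

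The main obstacle is certifying that the affine hull of these 8 tight vectors has dimension $4$, which together with $\dim(\mathrm{BME}(5,1))=5$ promotes the face to a facet. As in the proofs of Theorem~\ref{2splits} and Theorem~\ref{lb_51}, this dimensional check is dispatched computationally in polymake. A cleaner combinatorial argument would be desirable; the observed split of the tight set into four excluded-in-cherry networks and four excluded-on-a-side networks suggests an involution or product structure on the facet that might replace the polymake step.
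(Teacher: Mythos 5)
Your proposal is correct and follows essentially the same route as the paper: enumerate the $5\cdot 3\cdot 2=30$ choices, describe the 8 tight vertices (excluded taxon on the cherry or a side of the 4-cycle, never central), verify the inequality on all 30 vertices, and certify the codimension-1 dimension count computationally in polymake. Your explicit component table for a generic $(5,1)$ vertex and the two-case analysis of when the left-hand side reaches 3 is in fact more detailed than the paper's ``by inspection.''
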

\begin{proof} These facets correspond to choosing 4 of the 5 taxa, excluding one taxon. Then the four are given a cyclic order, and then that cyclic order is split into two contiguous pairs. These choices are independent, giving 5(3)2 = 30 facets. Each facet has 8 networks as its vertices, found by choosing to place the excluded taxon on either the cherry or the 4-cycle (but not on the central leaf); followed by placing the chosen four taxa on the remaining leaves in their cyclic order, but not allowing either contiguous pair to be separated by the excluded taxon.
By inspection, each excluded node facet obeys the following inequality, where the circular ordering is $a,b;c,d$ with the first and second pairs contiguous:
$x_{ab} + x_{cd}-x_{ac}-x_{bd} \le 3$.\end{proof}
Each excluded node facet is a 4D prism: the interval crossed with a tetrahedron. For an example see the excluded node facet for the cycle $1,2;5,3$ pictured in Figure~\ref{excyc}.

\begin{figure}[h]
\centering{\includegraphics[width=\textwidth]{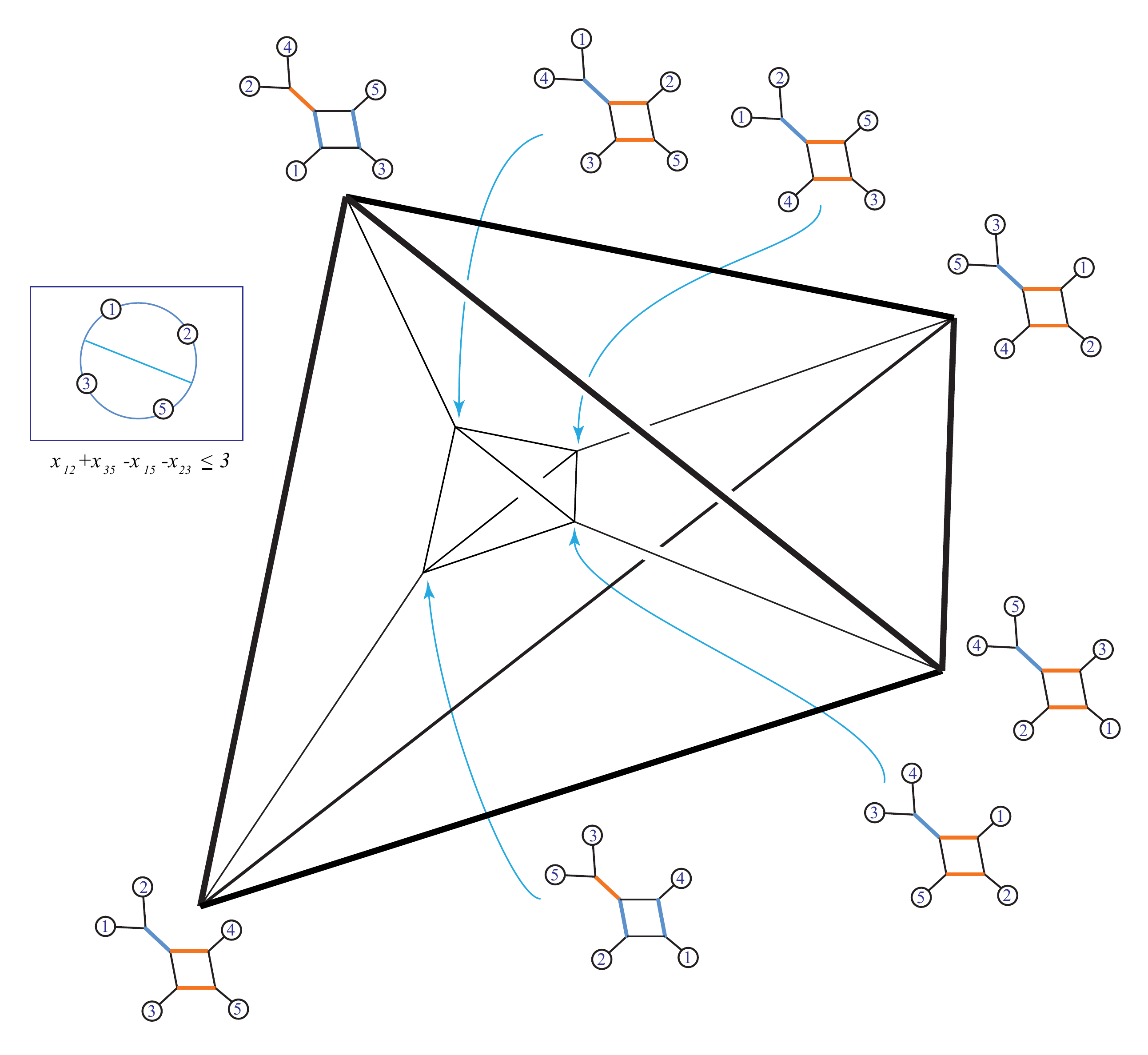}} \caption
{An excluded node facet in BME(5,1). Vertices obey  $x_{1,2} +
x_{5,3}-x_{1,5}-x_{2,3} = 3.$ } \label{excyc}
\end{figure}

\begin{thm}\label{cyc51}
In BME(5,1) there are 12 facets which we call the \emph{cyclic order} facets. They have 5 vertices each, which obey (sharply) the facet inequality: $$x_{ab} + x_{bc} + x_{cd} + x_{df} + x_{af} \le 8.$$ where $a,b,c,d,f$ is a cyclic order on the taxa. \end{thm}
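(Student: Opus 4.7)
The plan is to verify the inequality vertex-by-vertex in BME(5,1), pinpoint the five vertices that saturate it, and then appeal to polymake (as with the preceding theorems of this section) to certify that the resulting 4-simplex is actually a facet. Multiplying through by the $(5-1)!/2 = 12$ distinct cyclic orderings on $[5]$ yields the asserted 12 facets.

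By formula~(\ref{e:bmenkvert}) with $k=1$, every component of a vertex $\mathbf{x}(s)$ of BME(5,1) lies in $\{0,1,2\}$. Using the common underlying graph of a $(5,1)$-network recalled at the start of Section~\ref{5:1}, the components equal to $2$ are precisely the cherry pair together with the two pairs pairing the central leaf with each of its two neighbors on the 4-cycle; by Corollary~\ref{compsum} the remaining seven components consist of four $1$'s and three $0$'s, summing to $10$. Now fix a cyclic order $(a,b,c,d,f)$ and let $\alpha,\beta,\gamma$ count how many of its five pentagon edges are $2$-valued, $1$-valued, and $0$-valued components of a given $\mathbf{x}(s)$. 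Then $\alpha+\beta+\gamma=5$ with $\alpha\le 3$ and $\beta\le 4$, and the left-hand side of the claimed inequality equals
$$2\alpha+\beta \;=\; \alpha+(5-\gamma) \;\le\; 3+5 \;=\; 8,$$
with equality iff $\alpha=3$ and $\gamma=0$. So the inequality holds on every vertex, and saturation forces the three $2$-pairs of $s$ to be pentagon edges and the remaining two pentagon edges to be $1$-valued.

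The three $2$-valued pairs of any $(5,1)$-network form a subgraph isomorphic to $P_3 \cup K_2$ (the length-2 path threads the central leaf through its two 4-cycle neighbors, while the disjoint edge is the cherry). Among the $\binom{5}{3}=10$ three-edge subsets of the pentagon, exactly five are of this type, one for each choice of middle vertex of the path. Each such configuration determines a unique network: the middle vertex becomes the central leaf, the path endpoints become the two 4-cycle leaves, and the disjoint edge becomes the cherry. A quick case check (or the cyclic $\mathbb{Z}_5$-symmetry of labels) confirms that none of these five networks has a $0$-valued pair lying on the pentagon, so each indeed attains the sum $8$. Since $\dim \mathrm{BME}(5,1)=\binom{5}{2}-5=5$ by Theorem~\ref{cool}, any 4-dimensional face is a facet, and five affinely independent vertices necessarily span a 4-simplex facet. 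The principal remaining obstacle is certifying this affine independence for the specific saturating vectors in $\mathbb{R}^{10}$; as in the earlier theorems of Section~\ref{5:1}, I would discharge this step by direct polymake computation.
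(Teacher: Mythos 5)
Your proposal is correct, and it supplies more of an actual proof than the paper does. The paper's argument is almost entirely ``by inspection'' (i.e., polymake): it names the five vertices of each facet as the five networks consistent with the given circular ordering, distinguished by the choice of which taxon sits on the central leaf, and simply asserts that their vectors satisfy the equality and that the resulting faces are facets. You arrive at the same five vertices (your ``middle vertex of the $P_3$'' is exactly the paper's ``central leaf''), but you add a genuine combinatorial verification that the inequality is valid at every vertex of BME$(5,1)$: each vertex vector has three $2$'s (the cherry pair plus the central leaf paired with each of its two cycle neighbors, forming $K_2\cup P_3$), four $1$'s, and three $0$'s, so the pentagon-edge sum is $2\alpha+\beta=\alpha+5-\gamma\le 8$, with equality iff $\alpha=3$ and $\gamma=0$; the $\binom{5}{3}$ analysis then pins down the saturating set as exactly those five networks. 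This validity argument and the exact identification of the face's vertex set are steps the paper never makes explicit. What your route does not buy is the codimension-one claim: like the paper, you still defer to polymake to certify that the five saturating vectors are affinely independent, so both proofs rest on the same computation for facet-ness. Two small wording points: the seven non-$2$ components sum to $4$, not $10$ (the $10$ is the total from Corollary~\ref{compsum}); and the count of $12$ facets also needs the easy observation that distinct circular orderings give distinct pentagon edge sets, hence distinct supporting hyperplanes and distinct vertex sets.
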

\begin{proof}
By inspection, there is a cyclic order facet for each of the 12 circular orderings. Each network (vertex) in
the facet represents the same circular ordering when reading the leaves around the network in a
circle (up to twists around the bridge). There are five such networks for a given circular ordering, distinct by the choice of which taxa to place on the central leaf of the 4-cycle. By inspection, the vectors of the trees in these facets adhere to the equality
$x_{ab} + x_{bc} + x_{cd} + x_{df} + x_{af} = 8.$
\end{proof}
For an example see the cyclic order facet for the cycle $a,b,c,d,f$ pictured in Figure~\ref{cyc}.

\begin{figure}[h]
\centering{\includegraphics[width=\textwidth]{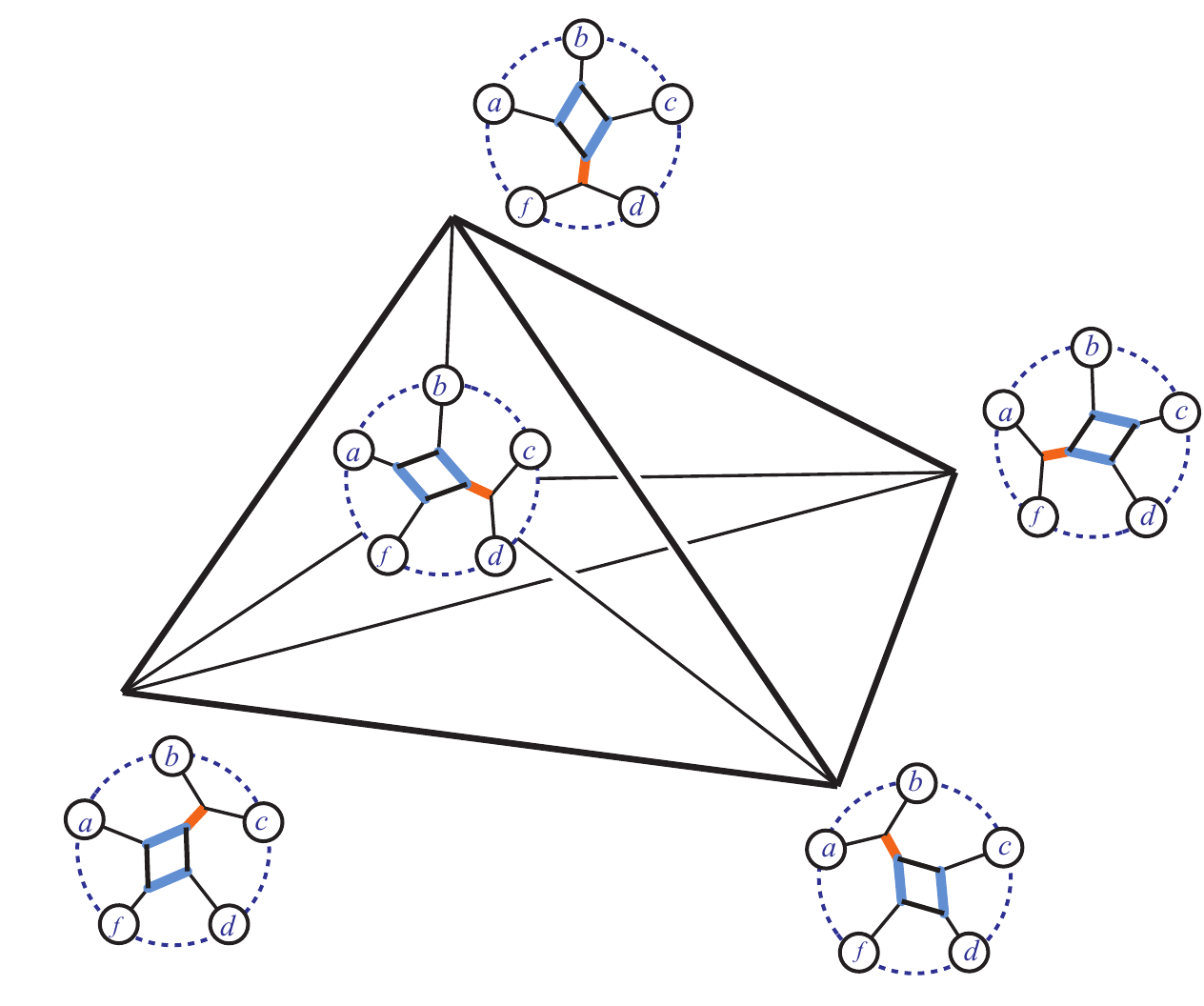}}
\caption {A generic cyclic order facet in BME(5,1). These vertices obey $x_{ab} + x_{bc} + x_{cd} + x_{df} + x_{af} = 8.$}
\label{cyc}
\end{figure}


%
%

\bibliographystyle{amsplain}
\bibliography{phylogenetics}{}

\end{document}